  \newtheorem{theorem}{Theorem}[section]
  \newtheorem{lemma}[theorem]{Lemma}
  \newtheorem{proposition}[theorem]{Proposition}
  \newtheorem{corollary}[theorem]{Corollary}
  \newtheorem{definition}[theorem]{Definition}
  \newtheorem{remark}[theorem]{Remark}
  \renewenvironment{proof}[1][]{
    	\begin{trivlist}
     	\item[\hspace{\labelsep}{\em\noindent Proof#1:\/}]}
     	{{\hfill$\Box$}
    	\end{trivlist}
  }
  \renewcommand{\Pr}{\mbox{\rm Pr}}
  \newcommand{\Exp}{{\mathbb{E}}}
  \newcommand{\R}{\mathbb{R}} 
  \newcommand{\C}{\mathbb{C}} 
  \newcommand{\N}{\mathbb{N}} 
  \newcommand{\F}{\mathbb{F}} 
  \newcommand{\bset}[1]{\{0,1\}^{#1}} 
  \newcommand{\1}{\mathbf{1}}
  \DeclareMathOperator{\charac}{char}
  \newcommand{\one}{\mathbf{1}}
  \newcommand{\st}{:\,} 
  \newcommand{\eps}{\varepsilon}
  \DeclareMathOperator{\rank}{rk}
  \DeclareMathOperator{\arank}{arank}
  \DeclareMathOperator{\prank}{prank} 
  \DeclareMathOperator{\srank}{srank} 
  \DeclareMathOperator{\GR}{GR} 
  \DeclareMathOperator{\bias}{bias}
  \DeclareMathOperator{\Pol}{Pol}
  \DeclareMathOperator{\codim}{codim}
  \newcommand{\beq}{\begin{equation}}
  \newcommand{\eeq}{\end{equation}}
  \newcommand{\beqn}{\begin{equation*}}
  \newcommand{\eeqn}{\end{equation*}}
  \newcommand{\beqr}{\begin{eqnarray}}
  \newcommand{\eeqr}{\end{eqnarray}}
  \newcommand{\beqrn}{\begin{eqnarray*}}
  \newcommand{\eeqrn}{\end{eqnarray*}}
  \newcommand{\bmline}{\begin{multline}}
  \newcommand{\emline}{\end{multline}}
  \newcommand{\bmlinen}{\begin{multline*}}
  \newcommand{\emlinen}{\end{multline*}}
\begin{document}

\begin{frontmatter}[classification=text]

\title{Random Restrictions of High-Rank Tensors and Polynomial Maps} 

\author[jb]{Jop Bri\"{e}t\thanks{Supported by the Dutch Research Council (NWO) as part of the NETWORKS programme (grant no. 024.002.003).}}
\author[dcs]{Davi Castro-Silva\thanks{Supported by the Dutch Research Council (NWO) as part of the NETWORKS programme (grant no. 024.002.003).}}

\begin{abstract}
Motivated by a problem in computational complexity, we consider the behavior of rank functions for tensors and polynomial maps under random coordinate restrictions.
We show that, for a broad class of rank functions called \emph{natural rank functions}, random coordinate restriction to a dense set will typically reduce the rank by at most a constant factor.
\end{abstract}
\end{frontmatter}


%
\section{Introduction}
\label{sec:intro}

Different but equivalent definitions of matrix rank have been generalized to truly different  rank functions for tensors.
Although they have proved useful in a variety of applications, the basic theory of these rank functions, describing for instance their interrelations and elementary properties, is still far from complete.

Without going into the definitions (which are given in Section~\ref{sec:tensors}), we mention a number of these rank functions to indicate some of the contexts in which they have appeared.
The \emph{slice rank} of a tensor was introduced by Tao~\cite{Tao:srank, TaoSawin} to reformulate the breakthrough proof of the cap set conjecture due to Croot, Lev and Pach~\cite{CrootLP2017} and Ellenberg and Gijswijt~\cite{EllenbergG2017}.
Slice rank is generalized by the \emph{partition rank}, which was introduced by Naslund to prove bounds on the size of subsets of~$\F_q^n$ without $k$-right corners~\cite{Naslund2020}, as well as provide exponential improvements on the Erd\H{o}s--Ginzburg--Ziv constant~\cite{NaslundEGZ}.
The \emph{analytic rank} is based on a measure of equidistribution for multilinear forms associated to tensors over finite fields, and was introduced by Gowers and Wolf to study solutions to linear systems of equations in large subsets of finite vectors spaces~\cite{GowersW2011}.
\emph{Geometric rank}, defined and studied by Kopparty, Moshkovitz and Zuiddam in the context of algebraic complexity theory~\cite{KoppartyMZ}, gives a natural analogue of analytic rank for tensors over infinite fields.

Closely related to these rank functions for tensors  are notions of rank for multivariate polynomials.
As quadratic forms on finite-dimensional vector spaces are naturally represented by matrices after choosing a basis, matrix rank gives a corresponding notion of rank for quadratic forms.
This, in turn, may be generalized to rank functions for arbitrary polynomials by considering their associated homogeneous multilinear forms.
Specific problems concerning multivariate polynomials might also give rise to other notions of rank more well-suited to the application at hand.

A notion of polynomial rank akin to the partition rank of tensors was used already in the '80s by Schmidt in work on number theory~\cite{Schmidt:1985}.
This notion has been re-discovered and proven useful on several occasions (see Section~\ref{sec:polynomials}), and is referred to by several different names in the literature;
here we will refer to it as \emph{Schmidt rank}.
Work on the Inverse Theorem for the Gowers uniformity norms led Green and Tao to define the notion of \emph{degree rank}~\cite{GreenT2009}, which quantifies how hard it is to express the considered polynomial as a function of lower-degree polynomials;
this notion was shown to be closely linked to equidistribution properties of multivariate polynomials over prime fields $\F_p$.
Tao and Ziegler~\cite{TaoZ12} later studied the relationship between the degree rank of a polynomial\footnote{The notion of degree rank studied by Tao and Ziegler was actually a slight modification of the original one, where they also allow for ``non-classical polynomials'' when expressing higher-degree polynomials in terms of lower-degree ones.}
and its \emph{analytic rank}, defined as the (tensor) analytic rank of its associated homogeneous multilinear form, and exploited their close connection in order to prove the general case of the Gowers Inverse Theorem over~$\F_p^n$.

Recent work on constant-depth Boolean circuits by Buhrman, Neumann and the present authors gave rise to a problem on equidistribution properties of higher-dimensional polynomial maps under biased input distributions~\cite{BrietBCSN}.
This motivated a new notion of analytic rank for (high-dimensional) polynomial maps and prompted the study of rank under random coordinate restrictions, which is the topic of this paper.

\medskip
Common to the tensors, polynomials and polynomial maps considered here is that they can be viewed as maps on~$\F^X$, where~$\F$ is a given field and $X$ is a finite set indexing the variables.
The main question addressed in this paper is whether, if a map~$\phi$ on~$\F^X$ has high rank, most of its coordinate restrictions~$\phi_{|I}$ on~$\F^I$ also have high rank for dense subsets~$I\subseteq X$
(where we also respect the product structure of~$X$ in the case of tensors).
Our main results show that this is the case for all ``natural'' rank functions, which include all those mentioned above.

\subsection{The matrix case}
\label{sec:matrixcase}

It is instructive to first consider the case of matrices, which is simpler and illustrates the spirit of our main results.
For a matrix $A\in \F^{n\times n}$ and subsets $I,J\subseteq [n]$, denote by~$A_{|I\times J}$ the sub-matrix of~$A$ induced by the rows in~$I$ and columns in~$J$.
Given $\sigma\in (0,1)$, consider a random set $I\subseteq [n]$ containing each element independently with probability~$\sigma$;
we write $I\sim[n]_\sigma$ when~$I$ is distributed as such.
Note that, if $I\sim[n]_\rho$ and $J\sim[n]_\sigma$ are independent, then $I\cup J\sim [n]_\eta$ with $\eta = 1 - (1 - \rho)(1 - \sigma)$.

\begin{proposition}\label{prop:matrixcase}
For every $\sigma\in (0,1]$ there exists $\kappa\in (0,1]$ such that for every matrix $A\in \F^{n\times n}$ we have
\beqn
\Pr_{I\sim[n]_\sigma}\big[\rank(A_{|I\times I}) \geq \kappa \cdot \rank(A)\big] \geq 1 - 2e^{-\kappa \rank(A)}.
\eeqn
\end{proposition}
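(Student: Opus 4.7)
The plan is to reduce the problem to lower-bounding the rank of a submatrix with \emph{independent} row and column indices, then exploit linear independence of an invertible submatrix of $A$ together with two applications of the Chernoff bound. Set $\sigma_0 := 1-\sqrt{1-\sigma}$ and write $I = I_A \cup I_B$, where $I_A, I_B \sim [n]_{\sigma_0}$ are independent; this gives the correct marginal for $I$ since $1-(1-\sigma_0)^2 = \sigma$. Because $A_{|I_A \times I_B}$ is a submatrix of $A_{|I \times I}$, it suffices to lower-bound $\rank(A_{|I_A \times I_B})$.

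Using $\rank(A) = r$, fix $I_0, J_0 \subseteq [n]$ with $|I_0| = |J_0| = r$ such that $B := A_{|I_0 \times J_0}$ is invertible, and set $S_1 := I_A \cap I_0$ and $S_2 := I_B \cap J_0$ (viewed as subsets of $[r]$ via identification with $I_0, J_0$). By independence of $I_A$ and $I_B$, the random sets $S_1, S_2$ are independent and each distributed as $[r]_{\sigma_0}$, and $\rank(A_{|I_A \times I_B}) \geq \rank(B_{|S_1 \times S_2})$. To lower-bound the latter, note that the rows of $B$ are linearly independent, so $B_{|S_1 \times [r]}$ has rank $|S_1|$; hence there exists (as a function of $S_1$ alone) a column set $J^* \subseteq [r]$ with $|J^*| = |S_1|$ such that $B_{|S_1 \times J^*}$ is invertible. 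Restricting further to columns in $J^* \cap S_2 \subseteq S_2$ preserves linear independence of the columns, so $\rank(B_{|S_1 \times S_2}) \geq |J^* \cap S_2|$.

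Two Chernoff bounds close the argument. First, $|S_1| \sim \mathrm{Bin}(r, \sigma_0)$, so $|S_1| \geq \sigma_0 r / 2$ fails with probability at most $e^{-\sigma_0 r / 8}$. Conditional on $S_1$ (which determines $J^*$), $|J^* \cap S_2| \sim \mathrm{Bin}(|S_1|, \sigma_0)$, so $|J^* \cap S_2| \geq \sigma_0 |S_1| / 2$ fails with conditional probability at most $e^{-\sigma_0 |S_1| / 8}$. A union bound then yields $\rank(A_{|I \times I}) \geq \sigma_0^2 r / 4$ except with probability at most $2 e^{-\kappa r}$, for say $\kappa = \sigma_0^2 / 16$.

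The main obstacle one must anticipate is that in general $I_0$ and $J_0$ are forced to overlap (for instance, when $A = I_r \oplus 0$ one must take $I_0 = J_0$), which would couple $I \cap I_0$ and $I \cap J_0$ through $I \cap (I_0 \cap J_0)$ and break the clean two-step conditional Chernoff analysis. The trick of decomposing $I$ as the union of two independent samples $I_A, I_B$ is exactly what decouples the row and column selections and makes the argument go through.
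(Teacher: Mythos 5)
Your proof is correct and follows essentially the same route as the paper's: you split $I$ into the union of two independent $\sigma_0$-Bernoulli sets to decouple rows from columns, then apply Chernoff twice (first to a set of linearly independent rows, then to a set of linearly independent columns chosen as a function of the selected rows), arriving at the same constants $\sigma_0^2 r/4$ and $\kappa = \sigma_0^2/16$. The only cosmetic difference is that you fix an invertible $r\times r$ submatrix $A_{|I_0\times J_0}$ up front and work inside it, whereas the paper selects $r$ independent rows of $A$ and then independent columns of the resulting row-restricted matrix from all of $[n]$; both variants are equally valid.
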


\begin{proof}
Write $\rho=1 - \sqrt{1 - \sigma}$ and let $J,J'\sim[n]_\rho$ be independent random sets;
note that $J\cup J'\sim[n]_\sigma$.
Let $r = \rank(A)$, and fix a set $S\subseteq [n]$ of~$r$ linearly independent rows of~$A$.
By the Chernoff bound~\cite{HagerupRub:1990}, the probability that the set~$J$ satisfies $|J\cap S| < \rho r/2$ is at most~$e^{-\rho r/8}$.

Now let~$B := A_{|(J\cap S) \times [n]}$ be the (random) sub-matrix of~$A$ formed by the rows in~$J\cap S$.
Since its rows are linearly independent, the rank of $B$ is precisely~$|J\cap S|$;
let $T\subseteq [n]$ be a set of $|J\cap S|$ linearly independent columns of~$B$.
Then the probability that $|J'\cap T| < \rho|T|/2$ is at most~$e^{-\rho |T|/8}$, and the rank of $B_{|(J\cap S) \times (J'\cap T)} = A_{|(J\cap S) \times (J'\cap T)}$ is equal to $|J'\cap T|$.
It follows from the union bound and monotonicity of rank under restrictions that, with probability at least $1 - 2e^{-\rho^2 r/16}$, the principal sub-matrix of~$A$ induced by $J\cup J'$ has rank at least $\rho^2 r/4$.
The result now follows since $J\cup J'\sim[n]_\sigma$.
\end{proof}

\subsection{Main results and outline of the paper}

Here we generalize Proposition~\ref{prop:matrixcase} to tensors and polynomial maps for rank functions that satisfy a few natural properties, namely
``sub-additivity'',
``monotonicity'',
a ``Lipschitz condition'' and, in the case of polynomial maps,
``symmetry''
(see Section~\ref{sec:tensors} and Section~\ref{sec:polynomials} for the precise definitions).
Those functions which satisfy these properties are called \emph{natural rank functions};
we note that all notions of rank mentioned above are natural rank functions.

Since our results are independent of the field considered (which can be finite or infinite), we will always denote it by $\F$ and suppress statements of the form ``let $\F$ be a field'' or ``for every field $\F$''.
We begin by considering the case of tensors.

\begin{definition}[Tensors]\label{def:tensors}
For finite sets $X_1,\dots,X_d \subset \N$, a $d$-tensor is a map $T:X_1\times\cdots\times X_d\to \F$.
We will associate with any $d$-tensor~$T$ a multilinear map $\F^{X_1}\times\cdots\times \F^{X_d}\to \F$ and an element of $\F^{X_1}\otimes \cdots\otimes \F^{X_d}$ in the obvious way, and also denote these objects by~$T$.
\end{definition}

\begin{definition}[Restriction of tensors]
For a tensor~$T$ as in Definition~\ref{def:tensors} and subsets $I_1\subseteq X_1,\dots,I_d\subseteq X_d$, denote $I_{[d]} = I_1\times \cdots\times I_d$ and write~$T_{|I_{[d]}}$ for the restriction of~$T$ to~$I_{[d]}$.
If~$T$ is viewed as an element of $\F^{X_1}\otimes \cdots\otimes \F^{X_d}$, then~$T_{|I_{[d]}}$ is simply a sub-tensor.
\end{definition}

We define $(\F^\infty)^{\otimes d}$ to be the set of $d$-tensors over~$\F$ with finite support.
Note that the tensors defined on finite sets naturally embed into this set, and that the rank functions for tensors discussed above are invariant under this embedding.
Our main result regarding tensors is then as follows:

\begin{theorem}\label{thm:random_tensor}
For every~$d\in \N$ and~$\sigma\in (0,1]$, there exist constants $C, \kappa > 0$ such that the following holds.
For every natural rank function $\rank: (\F^{\infty})^{\otimes d} \to \R_+$ and every $d$-tensor $T \in \bigotimes_{i=1}^d \F^{[n_i]}$ we have
\beqn
\Pr_{I_1\sim [n_1]_{\sigma}, \dots, I_d\sim [n_d]_{\sigma}}\big[ \rank \big(T_{|I_{[d]}}\big) \geq \kappa\cdot \rank(T)\big] \geq 1 - C e^{-\kappa \rank(T)}.
\eeqn
\end{theorem}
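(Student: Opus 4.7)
The plan is to generalize the inductive Chernoff-style argument of the matrix case by restricting one coordinate direction at a time, combining a sub-additivity-based expectation bound with a Talagrand-type concentration inequality. I would first reduce the theorem to the following one-direction lemma: for every $\sigma \in (0,1]$ there exist constants $\kappa, C > 0$ such that, for every natural rank function and every tensor~$T$,
\[
\Pr_{J \sim [n_1]_\sigma}\bigl[\rank(T_{|J \times X_2 \times \cdots \times X_d}) \geq \kappa\, \rank(T)\bigr] \;\geq\; 1 - C\exp(-\kappa\, \rank(T)).
\]
The full theorem would then follow by iterating this lemma $d$ times, once per direction, and summing the failure probabilities: conditional on success at steps $1,\ldots,k-1$ the current rank is at least $\kappa^{k-1}\rank(T)$, so the $k$-th conditional failure probability is at most $C\exp(-\kappa^k\rank(T))$, and the total is bounded by $dC\exp(-\kappa^d\rank(T))$, which gives the stated conclusion with $\kappa$ replaced by $\kappa^d$.

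I would prove the one-direction lemma in two steps. For the expectation bound, partition $[n_1]$ into $\lceil 1/\sigma\rceil$ roughly equal-sized blocks $B_1,\ldots,B_k$ and decompose $T$ accordingly into slice tensors $T^{(i)}$ supported on $B_i \times X_2 \times \cdots \times X_d$. Sub-additivity gives $\sum_i \rank(T_{|B_i \times \cdots}) \geq \rank(T)$, so a uniformly random block has expected rank at least $\sigma\,\rank(T)$; averaging over random partitions shows that a uniformly random size-$s$ subset of $[n_1]$ has expected sub-tensor rank at least $(s/n_1)\rank(T)$, and unconditioning on $|J|$ then yields $\E\bigl[\rank(T_{|J \times X_2 \times \cdots})\bigr] \geq \sigma\,\rank(T)$.

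For the concentration step, observe that $f(J) := \rank(T_{|J \times X_2 \times \cdots})$ is monotone and $1$-Lipschitz in $J$ by the monotonicity and Lipschitz axioms of a natural rank function. I would then invoke a Talagrand-type concentration inequality for self-bounding or configuration functions to obtain sub-Gaussian concentration of $f$ at scale $O(\sqrt{\E f})$ around its mean, yielding $f(J) \geq \sigma\,\rank(T)/2$ with failure probability $\exp(-\Omega(\sigma\,\rank(T)))$. The main obstacle will be verifying the small-certificate hypothesis of Talagrand's inequality for a general natural rank function: one must show that every $J$ with $f(J) = s$ contains a subset $J^* \subseteq J$ of size $O(s)$ satisfying $\rank(T_{|J^* \times \cdots}) \geq s$. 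For matrix rank this is just the fact that a rank-$s$ sub-matrix contains $s$ linearly independent rows, but for general natural rank functions I expect to need a greedy shrinking procedure that iteratively removes coordinates whose removal does not decrease $f$ (using the Lipschitz axiom to bound the rank drop by~$1$ per removal), combined with a sub-additivity argument showing that the resulting set of ``essential'' coordinates has size~$O(s)$.
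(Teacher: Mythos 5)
Your outer skeleton is fine and in fact matches the paper: restrict one direction at a time, handle each direction by a lemma about the cube function $f(\1_J)=\rank(T_{|J\times X_2\times\cdots\times X_d})$, and combine the $d$ steps by a union bound (the paper gets $\kappa\approx(\sigma/4)^d$ this way). Your expectation step is also essentially the paper's: the paper uses a coupling/random-partition argument to get $\Pr_{x\sim\pi_{1/k}^n}[f(x)\geq r/k]\geq 1/k$, which is the anti-concentration you derive via sub-additivity plus Markov. The genuine gap is the concentration step. The small-certificate hypothesis you would need for Talagrand's configuration-function (or a self-bounding) inequality --- every $J$ with $f(J)=s$ contains $J^*\subseteq J$ with $|J^*|=O(s)$ and $f(J^*)=\Omega(s)$ --- does not follow from the natural-rank axioms, and your greedy shrinking argument cannot prove it: taking a minimal $J^*$ and using the Lipschitz axiom only controls how much the \emph{value} drops per removed coordinate, not how many coordinates a minimal set must contain; for that one needs submodularity or a matroid-type exchange property (which is exactly why the matrix case is easy), and sub-additivity gives neither. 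In fact the certificate is false at the level of generality you are working in: $c\cdot\srank$ with small $c\in(0,1)$ is a natural rank function, and for the diagonal tensor $T\in(\F^n)^{\otimes 3}$ one has $f(J)=c|J|$, so $r=cn$ while every $J^*$ with $|J^*|=O(r)=O(cn)$ satisfies $f(J^*)=O(cr)\ll r$. Even for the unscaled slice or partition rank, a linear certificate of this kind is essentially the (open) linear core property discussed in Section~\ref{sec:discussion}: Gowers' example of a slice-rank-$4$ tensor with no full-rank $4\times4\times4$ subtensor and Karam's Conjecture 13.1 indicate this is genuinely hard, and Theorem~\ref{thm:restriction_core} shows that \emph{if} one had it, the easy matrix-style Chernoff argument would already suffice --- the whole point of the paper's argument is to avoid needing it.

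The paper circumvents the issue in Lemma~\ref{lem:subadditive} by using Talagrand's inequality for the two-point distance $h_2(x;A)=\min_{y,z\in A}|\{i:\ x_i\neq y_i \text{ and } x_i\neq z_i\}|$, which holds for \emph{arbitrary} sets $A$ with no certifiability hypothesis; the role your certificate was meant to play is taken over by sub-additivity of $f$ itself: if $h_2(x;A)<\sigma r/6$ for $A=\{f\leq\sigma r/4\}$, splitting $x$ into the coordinates where it agrees with the witness $y$, those where it agrees with $z$, and the at most $\sigma r/6$ remaining ones gives $f(x)\leq f(y)+f(z)+\sigma r/6<2\sigma r/3$, and the coupling bound $\Pr[f\geq 2\sigma r/3]\geq 2\sigma/3$ then combines with Talagrand to give the lower-tail estimate. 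Note also that what one gets (and needs) is only a lower-tail bound at scale $\Theta(\sigma r)$, not sub-Gaussian concentration of width $O(\sqrt{\E f})$ around the mean. If you replace your configuration-function step by this $h_2$-based argument, your proof becomes the paper's.
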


As noted before, the union of independent Bernoulli-random subsets of~$[n]$ is again a Bernoulli-random subset.
As a consequence of this and monotonicity under restrictions, in the standard case of ``cubic'' tensors where every row is indexed by the same set, one also obtains the following symmetric version of the last theorem:

\begin{corollary}
For every~$d\in \N$ and~$\sigma\in (0,1]$, there exist constants $C, \kappa > 0$ such that the following holds.
For every natural rank function $\rank: (\F^{\infty})^{\otimes d} \to \R_+$ and every $d$-tensor $T \in (\F^n)^{\otimes d}$ we have
\beqn
\Pr_{I\sim [n]_{\sigma}}\big[ \rank \big(T_{|I^d}\big) \geq \kappa\cdot \rank(T)\big] \geq 1 - C e^{-\kappa \rank(T)}.
\eeqn
\end{corollary}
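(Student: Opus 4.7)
The plan is to reduce the symmetric ``cubic'' statement to Theorem~\ref{thm:random_tensor} by an appropriate coupling. The key observations are the two facts already pointed out by the authors right before the corollary: the union of independent Bernoulli-random subsets of~$[n]$ is again Bernoulli-random, and every natural rank function is monotone under restrictions.

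Concretely, given~$\sigma\in (0,1]$ and~$d\in\N$, I would set $\rho = 1 - (1-\sigma)^{1/d}\in (0,1]$ and invoke Theorem~\ref{thm:random_tensor} with parameter~$\rho$ in place of~$\sigma$ to obtain constants $C'=C(d,\rho)$ and $\kappa'=\kappa(d,\rho)$. I would then sample independent sets $J_1,\dots,J_d\sim [n]_\rho$ and apply the theorem to the cubic tensor~$T\in (\F^n)^{\otimes d}$, which gives
\[
\Pr\bigl[\rank\bigl(T_{|J_1\times\cdots\times J_d}\bigr) \geq \kappa'\cdot\rank(T)\bigr] \geq 1 - C' e^{-\kappa' \rank(T)}.
\]

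Now set $I := J_1\cup\cdots\cup J_d$. A direct calculation using independence shows that for every $x\in[n]$,
\[
\Pr[x\notin I] = \prod_{i=1}^d \Pr[x\notin J_i] = (1-\rho)^d = 1-\sigma,
\]
so $I\sim [n]_\sigma$. Moreover, since $J_i\subseteq I$ for every~$i$, the tensor $T_{|J_1\times\cdots\times J_d}$ is a sub-tensor of $T_{|I^d}$, so the monotonicity property of any natural rank function gives
\[
\rank\bigl(T_{|I^d}\bigr) \geq \rank\bigl(T_{|J_1\times\cdots\times J_d}\bigr).
\]
Combining the two displays and marginalizing over the coupling yields the claimed bound with constants $C=C'$ and $\kappa=\kappa'$.

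There is really no obstacle here beyond setting up the coupling correctly; the entire content of the corollary is packaged into Theorem~\ref{thm:random_tensor} plus the two structural facts mentioned above. The only minor point to verify is that ``monotonicity'' in the paper's definition of a natural rank function is stated in the form needed (rank of a sub-tensor is at most rank of the ambient tensor), which is how it should be formalized in Section~\ref{sec:tensors}.
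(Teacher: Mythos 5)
Your proposal is correct and follows exactly the route the paper intends: sample $J_1,\dots,J_d\sim[n]_\rho$ independently with $\rho=1-(1-\sigma)^{1/d}$, apply Theorem~\ref{thm:random_tensor}, and then use that $I=J_1\cup\cdots\cup J_d\sim[n]_\sigma$ together with monotonicity under restrictions, which is precisely the two-fact argument the authors sketch just before the corollary. Nothing is missing.
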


Whereas the proof of the matrix case (Proposition~\ref{prop:matrixcase}) uses the fact that a rank-$r$ matrix contains a full-rank $r\times r$ submatrix, the proof of the general case of Theorem~\ref{thm:random_tensor} proceeds differently and instead uses ideas from probability theory, in particular concerning concentration inequalities on product spaces.
It will be presented in Section~\ref{sec:tensors}.

Recently, the topic of high-rank restrictions of tensors was explored in detail by Karam~\cite{Karam2022}, and Gowers provided an example of a 3-tensor of slice rank $4$ which does not have a full-rank $4\times 4\times 4$ subtensor
(see \cite[Proposition 3.1]{Karam2022}).
There are some interesting parallels between our results and those of Karam, which will be discussed later on in Section~\ref{sec:discussion};
to the best of our knowledge, his results do not suffice to establish our main theorem above, nor the other way around.

Next we consider the setting of polynomial maps, which are formally defined as follows:

\begin{definition}[Polynomial map]\label{def:polymap}
A polynomial map is an ordered tuple $\phi(x) = \big(f_1(x),\dots,f_k(x)\big)$ of polynomials $f_1,\dots,f_k\in \F[x_1,\dots,x_n]$.
We identify with~$\phi$ a map $\F^n\to\F^k$ in the natural way.
The degree of~$\phi$ is the maximum degree of the~$f_i$.
\end{definition}

\begin{definition}[Restriction of polynomial maps]
For a polynomial map~$\phi:\F^n\to\F^k$ and a set~$I\subseteq [n]$, define the restriction~$\phi_{|I}:\F^I\to\F^k$ to be the map given by
$\phi_{|I}(y) = \phi(\bar y)$,
where~$\bar y\in \F^n$ agrees with~$y$ on the coordinates in~$I$ and is zero elsewhere.
\end{definition}

We denote the space of all polynomial maps $\phi: \F^n\to\F^k$ of degree at most~$d$ by $\Pol_{\leq d}(\F^n, \F^k)$, and write
\beqn
\Pol_{\leq d}(\F^{\infty}, \F^k) = \bigcup_{n\in \N} \Pol_{\leq d}(\F^n, \F^k).
\eeqn
Our main result in this setting is the following:

\begin{theorem} \label{thm:random_poly}
For every~$d\in \N$ and~$\sigma,\eps\in (0,1]$, there exist constants~$\kappa = \kappa(d, \sigma)>0$ and~$R = R(d,\sigma,\eps)\in\N$ such that the following holds.
For every natural rank function~$\rank: \Pol_{\leq d}(\F^{\infty}, \F^k) \to \R_+$ and every map~$\phi \in \Pol_{\leq d}(\F^n, \F^k)$ with~$\rank(\phi) \geq R$, we have
\beqn
\Pr_{I\sim [n]_{\sigma}}\big[ \rank(\phi_{|I}) \geq \kappa\cdot \rank(\phi)\big] \geq 1 - \eps.
\eeqn
\end{theorem}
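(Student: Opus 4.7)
The plan is to reduce Theorem~\ref{thm:random_poly} to its tensor analogue Theorem~\ref{thm:random_tensor} via the standard correspondence between polynomial maps and multilinear forms, relying on the natural-rank-function axioms to ensure that the rank is preserved up to constants depending only on $d$.

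First, I would associate to each polynomial map $\phi \in \Pol_{\leq d}(\F^n,\F^k)$ a $(d+1)$-tensor $T_\phi \in (\F^{n+1})^{\otimes d} \otimes \F^k$, built by homogenizing each component of $\phi$ (introducing an auxiliary variable $x_0$) and polarizing into a symmetric multilinear form. Using the axioms of natural rank functions --- sub-additivity, monotonicity, symmetry, and the Lipschitz condition --- I would establish that $\rank(\phi)$ and $\rank(T_\phi)$ agree up to multiplicative constants depending only on~$d$, for any natural rank function. Under this association, the restriction $\phi_{|I}$ corresponds to the sub-tensor $(T_\phi)_{|(\{0\}\cup I)^d \times [k]}$.

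Next I would apply Theorem~\ref{thm:random_tensor} to $T_\phi$, sampling independent $\sigma$-random restrictions of all $d+1$ factors. The theorem guarantees that the resulting sub-tensor retains a $\kappa$-fraction of the rank with failure probability at most $Ce^{-\kappa\,\rank(T_\phi)}$. To match the desired restriction pattern --- keeping coordinate $0$ in each of the first $d$ factors and all of $[k]$ in the last --- I would condition on these inclusions. This conditioning event has constant probability depending on $d$, $\sigma$, and $k$, so the failure probability inflates by at most a constant factor. Translating back via the rank equivalence from Step~1 and choosing $R$ large enough so that the exponential tail falls below $\eps$ completes the argument.

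The main obstacle is the first step: establishing the quantitative polynomial-to-tensor rank equivalence using only the natural-rank-function axioms. This is a structural claim --- that these axioms, though apparently mild, are strong enough to pin down rank behavior up to $d$-dependent constants. A plausible route is a sandwich-style comparison with canonical rank functions such as Schmidt rank on the polynomial side and partition rank on the tensor side, whose mutual equivalence is classical. A secondary concern is that the conditioning in Step~2 depends on~$k$, which could in principle leak into $R$; this may be circumvented by a leaner associated tensor (for instance, the single polynomial $F(x,y) = y\cdot\phi(x)$ on $\F^{n+k}$ of degree $d+1$) or absorbed into the Lipschitz constants of the rank equivalence.
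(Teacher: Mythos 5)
There is a genuine gap, and it sits exactly where you flag it: Step~1. The theorem quantifies over \emph{all} natural rank functions $\rank:\Pol_{\leq d}(\F^{\infty},\F^k)\to\R_+$, and the four axioms (symmetry, sub-additivity, monotonicity, restriction Lipschitz) are far too weak to force any comparison between $\rank(\phi)$ and a tensor invariant of the polarized form $T_\phi$. Note first that $\rank$ is only defined on polynomial maps, so there is no tensor rank function to feed into Theorem~\ref{thm:random_tensor}; you would have to \emph{construct} one from the given $\rank$ and verify the tensor axioms for it, and no canonical construction is available for tensors that are not polarizations. Second, the proposed ``sandwich'' against Schmidt/partition rank cannot come from the axioms: the identically zero function is a natural rank function, so the axioms imply no lower bound against any canonical rank, and upper bounds fail equally (the axioms say nothing about, e.g., how $\rank$ treats lower-degree perturbations or scalar multiples). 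Even restricted to the concrete rank functions in the paper, the comparisons you invoke are properties of those specific definitions, not consequences of naturality, and the partition-rank/analytic-rank equivalence you would lean on is an open conjecture. Finally, polarization itself requires $d!$ to be invertible; over $\F_2$ with $d\geq 2$ (precisely the regime motivating the theorem, where $\deg(\phi)$ may exceed $\charac(\F)$) the symmetric multilinear form does not determine $\phi$, so the reduction cannot cover the stated generality. The paper's own remark after Theorem~\ref{thm:random_poly} makes this point: replacing it by Theorem~\ref{thm:random_tensor} is only ``possible'' for particular rank functions when $d!$ is invertible.

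For contrast, the paper does not go through tensors at all. It first proves an expectation version (Lemma~\ref{lem:expectation}) by induction on the degree, working over commutative rings so that splitting the variables into blocks $A\uplus B$ and using the identity $\phi=\phi_{|A}+\phi_{|B}-\phi(0)+\phi[A,B]$ lowers the degree of the cross term, with a coupling argument handling the alternative case; it then boosts from constant probability to probability $1-\eps$ (Lemma~\ref{lem:boost}) using the Margulis--Russo formula and Friedgut's junta theorem, at the cost of the rank threshold $R(d,\sigma,\eps)$. The paper also explains why the tensor-style concentration argument cannot be transplanted: the cube function $I\mapsto\rank(\phi_{|I})$ need not be sub-additive even when $\rank$ is (e.g.\ a polynomial in which every monomial mixes an $x$-block and a $y$-block variable), which is a second reason to be wary of arguments that treat the polynomial case as a formal corollary of the tensor case. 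Your Step~2 restriction-pattern issues (diagonal versus independent factor restrictions, and the $\F^k$ factor) are comparatively minor and could be patched with monotonicity, but without a proof of Step~1 the reduction does not get off the ground.
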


The proof of this theorem will be given in Section~\ref{sec:polynomials}.
For reasons that will be made clear in that section, this proof will be (at least superficially) quite different from that of the tensor case, Theorem~\ref{thm:random_tensor};
it relies instead on results from analysis of Boolean functions taken together with elementary combinatorial arguments.

\begin{remark}
The quantitative bounds obtained by Theorem~\ref{thm:random_poly} are much weaker than those of Theorem~\ref{thm:random_tensor}.
When $d!$ is invertible in~$\F$, however, many rank functions for polynomial maps~$\phi$ are closely related to rank functions for the symmetric tensor associated to~$\phi$ (see for instance~\cite[Section~3]{GowersW2011}).
In such cases it might be possible to replace an application of Theorem~\ref{thm:random_poly} by Theorem~\ref{thm:random_tensor} so as to obtain good bounds.
\end{remark}

In Section~\ref{sec:discussion} we discuss the relationship between our work and other works present in the literature.
In particular, we will explain the specific problem which led to the study of rank under random coordinate restrictions.
We will also give a simple conditional proof of our main theorems mimicking the case of matrices given in Proposition~\ref{prop:matrixcase}, as long as we assume the existence of a so-called ``linear core'' which would generalize (in a somewhat weak sense) the existence of a full-rank $r\times r$ submatrix inside matrices of rank $r$.
Finally, we propose some open problems to further our understanding of rank functions.

\subsection{Notation}
\label{sec:prelims}

Here we collect some notation that will be used throughout the paper.
Given sets  $I_1, \dots, I_d$, we write $I_{[d]} = I_1\times \cdots\times I_d$.
For a set~$J$ and some~$\sigma \in [0,1]$, we denote by~$\pi_{\sigma}^J$ the product distribution on~$\{0,1\}^J$ where each coordinate is independently set to $1$ with probability~$\sigma$, or to $0$ with probability~$1-\sigma$;
when~$J=[n]$, we write simply~$\pi_{\sigma}^n$.
We write $J_\sigma$ for the probability distribution over subsets of~$J$ where each element is present independently with probability~$\sigma$.
To denote that a random variable~$X$ is distributed according to a distribution~$\mu$, we write $X\sim \mu$.

\section{Tensors}
\label{sec:tensors}

Recall that, for a field $\F$ and integer $d\geq 2$, we denote by $(\F^\infty)^{\otimes d}$ the set of all $d$-tensors of finite support over $\F$.
All our results (including the values of the implied constants) will hold independently of the field considered, so we will always denote it by $\F$ without further comment.

The notions of tensor rank we will consider here are those called \emph{natural rank functions} as defined below:

\begin{definition}[Natural rank]\label{def:nat_rank}
We say that $\rank: (\F^{\infty})^{\otimes d} \to \R_+$ is a natural rank function if $\rank({\bf 0}) = 0$ and it satisfies the following properties:
\begin{enumerate}
    \item Sub-additivity: \\
    $\rank(T+S) \leq \rank(T) + \rank(S)$ for all~$T, S\in (\F^{\infty})^{\otimes d}$.
    \item Monotonicity under restrictions: \\
    $\rank\big(T_{|I_{[d]}}\big) \leq \rank(T)$ for all~$T\in (\F^{\infty})^{\otimes d}$ and all sets~$I_1, \dots, I_d \subset \N$.
    \item Restriction Lipschitz property: \\
    $\rank\big(T_{|J_{[d]}}\big) \leq \rank\big(T_{|I_{[d]}}\big) + \sum_{i=1}^d |J_i \setminus I_i|$
    for all~$T\in (\F^{\infty})^{\otimes d}$ and all sets~$I_1\subseteq J_1, \dots, I_d\subseteq J_d$.
    \end{enumerate}
\end{definition}

In order to motivate this definition, we now discuss several examples (and one non-example) of natural rank functions that have been studied in the literature.
In what follows, given $i\in [d]$ and an element $u\in X_i$, the restriction of~ $T:X_1\times\cdots\times X_d\to \F$ to the set $X_1\times\cdots X_{i-1}\times\{u\}\times X_{i+1}\times\cdots\times X_d$ is referred to as a \emph{slice}.
Note that property~(3) implies that a slice has rank at most~1;
conversely, under the assumption of property~(1), property~(3) is satisfied provided that slices have rank at most~1.

\subsubsection*{Slice rank}
The notion of slice rank was introduced by Tao~\cite{Tao:srank} to give a more symmetric version of Ellenberg and Gijswijt's proof~\cite{EllenbergG2017} of the cap set conjecture, and was later further studied by Sawin and Tao~\cite{TaoSawin}.
It has been used in the study of several extremal combinatorics problems, such as bounding the maximal sizes of tri-colored sum-free sets~\cite{BCCGNSU} and sunflower-free sets~\cite{NaslundS17}, as well as obtaining essentially tight bounds for Green's arithmetic triangle removal lemma~\cite{FoxL17}.

A nonzero $d$-tensor~$T$ (viewed as a multi-linear form) has slice rank~1 if there exist $i\in [d]$,  $R:\F^{X_i}\to \F$ and $S:\prod_{j\in [d]\setminus\{i\}} \F^{X_j}\to \F$ such that~$T$ can be factored as $T = RS$.
In general, the slice rank of~$T$, denoted $\srank(T)$, is then defined as the least $r\in \N$ such that there is a decomposition $T = T_1 + \cdots + T_r$ where each $T_i$ has slice rank~1.
Slice rank is sub-additive since the sum of decompositions of two tensors $S,T$ gives a decomposition of $S+T$.
It is monotone under restrictions since a decomposition of~$T$ induces a decomposition of its restrictions.
The restriction Lipschitz property can easily be verified inductively slice-by-slice using sub-additivity.

\subsubsection*{Partition rank}
The partition rank  was introduced by Naslund~\cite{Naslund2020} as a more general version of the slice rank which allows one to handle problems that require variables to be distinct.
It was first used to provide bounds on the size of subsets of $\F_q^n$ not containing $k$-right corners~\cite{Naslund2020}, as well as an upper bound for the Erd\H{o}s-Ginzburg-Ziv constant of $\F_p^n$~\cite{NaslundEGZ}.

A nonzero $d$-tensor~$T$ is defined to have partition rank~1 if there is a nonempty strict subset $I\subset [d]$ and tensors $R: \prod_{i\in I}\F^{X_i}\to \F$ and $S = \prod_{j\in [d]\setminus I}\F^{X_j}\to \F$ such that~$T$ can be factored as $T = RS$ .
In general, the partition rank of~$T$, denoted $\prank(T)$, is defined as the least $r\in \N$ such that there is a decomposition $T = T_1 + \cdots + T_r$ where each $T_i$ has partition rank~1.
The properties of Definition~\ref{def:nat_rank} for partition rank follow for the same reasons as for slice rank.

\subsubsection*{Analytic rank}
The notion of analytic rank was introduced by Gowers and Wolf~\cite{GowersW2011} to study an arithmetic notion of complexity for linear systems of equations over $\F_p^n$.
It gives a quantitative measure of equidistribution for the values taken by a tensor, and as such are well suited for arguments relying on the dichotomy between structure and randomness.
The analytic rank was further studied by Lovett~\cite{Lovett2019}, and more recently it was used by the first author in a problem concerning a random version of Szemer\'edi's Theorem over finite fields~\cite{Briet:2021}.

The analytic rank is defined only if~$\F$ is finite.
For a non-trivial additive character $\chi:\F\to \C^*$, the bias of a $d$-tensor~$T$ is defined by
\beqn
\bias(T) = \Exp_{x_1\in \F^{X_1},\dots,x_d\in \F^{X_d}}\chi\big(T(x_1,\dots,x_d)\big);
\eeqn
this definition is easily shown to be independent of the character~$\chi$ chosen.
The analytic rank of~$T$ is then defined by
\beqn
\arank(T) = -\log_{|\F|} \bias(T).
\eeqn
Lovett proved that the analytic rank is sub-additive~\cite[Theorem~1.5]{Lovett2019}.
Monotonicity follows from~\cite[Lemma~2.1]{Lovett2019}.
The restriction Lipschitz property now follows from sub-additivity and the fact that a single slice has analytic rank at most~1.

\subsubsection*{Geometric rank}
Motivated by applications in algebraic complexity theory and extremal combinatorics, as well as an open problem posed by Lovett, the geometric rank was introduced by Kopparty, Moshkovitz and Zuiddam~\cite{KoppartyMZ} as a natural extension of analytic rank beyond finite fields.
While its definition is algebraic geometric in nature, it turns out to have deep connections with partition rank and analytic rank, as shown by Cohen and Moshkovitz~\cite{CohenM21, CohenM21b}.

For an algebraically closed field~$\F$, the geometric rank of a $d$-tensor~$T$ is defined as
\beqn
\GR(T) = \codim\{(x_1,\dots,x_{d-1}) \st T(x_1,\dots,x_{d-1},\cdot) = 0\},
\eeqn
where $\codim$ denotes the codimension of an algebraic variety.
If the field $\F$ considered is not algebraically closed, then the geometric rank is naturally defined via the embedding
of $\F$ in its algebraic closure.
Monotonicity and sub-additivity of this rank function follow directly from Lemma~4.2 and Lemma~4.4 of~\cite{KoppartyMZ}, respectively.
The restriction Lipschitz property follows from these properties and the fact that a single slice has geometric rank at most~1.

\subsubsection*{Tensor rank}
It is also instructive to remark on an important non-example, the notion usually known simply as tensor rank, which is an important notion in the context of computational complexity (see for instance~\cite{Landsberg08, Raz13}).
A nonzero $d$-tensor $T$ has tensor rank~1 if it decomposes as $T = \bigotimes_{i\in [d]} u_i$ for some functions $u_i: \F^{X_i} \to \F$.
The tensor rank of a general $d$-tensor~$T$ is then defined as the least $r\in \N$ such that $T = T_1 + \cdots + T_r$, where each $T_i$ has rank~1.

This rank function is not natural according to our Definition~\ref{def:nat_rank} because it fails the restriction Lipschitz property.
Consider for instance the 3-tensor $T\in \F^{[n]\times [n]\times [2]}$ consisting of the identity matrix stacked on top of an all-ones matrix:
\begin{align*}
T(i, j, 1) = 1\big[i=j\big] \,\text{ and }\, T(i, j, 2) = 1 \quad \text{for $i, j \in [n]$.}
\end{align*}
The identity slice has tensor rank~$n$ (which implies that~$T$ has tensor rank at least~$n$), while the all-ones slice has tensor rank 1.
Thus, removing the identity slice $T(\cdot, \cdot, 1)$ reduces the tensor rank of~$T$ by at least~$n-1$, instead of reducing it by at most~1 as required by the restriction Lipschitz condition.
This should not be taken as an indication that our definition is too restrictive, however, as this example also shows that Theorem~\ref{thm:random_tensor} does \emph{not} hold for tensor rank:
a $\sigma$-random restriction of~$T$ has tensor rank at most~1 with probability $1 - \sigma$.
(This simple example can also be generalized in many ways.)

\bigskip

The main result of this section concerns how natural rank functions behave under random coordinate restrictions.
Intuitively, it shows that random restrictions of high-rank tensors will also have high rank with high probability.
For convenience, we repeat below its formal statement as given in the Introduction.

\begin{theorem}[Theorem~\ref{thm:random_tensor} restated]
For every~$d\in \N$ and~$\sigma\in (0,1]$, there exist constants~$C, \kappa > 0$ such that the following holds.
For every natural rank function $\rank: (\F^{\infty})^{\otimes d} \to \R_+$ and every $d$-tensor $T \in \bigotimes_{i=1}^d \F^{[n_i]}$ we have
\beqn
\Pr_{I_1\sim [n_1]_{\sigma}, \dots, I_d\sim [n_d]_{\sigma}}\big[ \rank \big(T_{|I_{[d]}}\big) \geq \kappa\cdot \rank(T)\big] \geq 1 - C e^{-\kappa \rank(T)}.
\eeqn
\end{theorem}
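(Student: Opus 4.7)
The plan is to combine an expectation lower bound coming from sub-additivity with a concentration-of-measure estimate for the restricted rank viewed as a monotone 1-Lipschitz function of the Bernoulli indicators $\{\mathbf{1}_{v\in I_i}\}_{i,v}$.

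\emph{Expectation via partitioning.} Partition each $[n_i]$ into $m:=\lceil 1/\sigma\rceil$ blocks $J_i^{(1)},\dots,J_i^{(m)}$ by sending each element independently and uniformly to one of the $m$ bins, with the partitions in different coordinates independent. Writing $T=\sum_{\alpha\in[m]^d} T_{|J_1^{(\alpha_1)}\times\cdots\times J_d^{(\alpha_d)}}$ and invoking sub-additivity (property~(1) of Definition~\ref{def:nat_rank}),
\[
\rank(T)\;\leq\;\sum_{\alpha\in[m]^d}\rank\!\big(T_{|J_1^{(\alpha_1)}\times\cdots\times J_d^{(\alpha_d)}}\big).
\]
All $m^d$ summands are identically distributed (each is the rank of $T$ restricted to an independent product of $[n_i]_{1/m}$ subsets), so taking expectations yields $\E\!\big[\rank(T_{|J^{(1)}_{[d]}})\big]\geq \rank(T)/m^d$. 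Coupling $I_i\sim [n_i]_\sigma$ to contain a $[n_i]_{1/m}$ subset (possible since $\sigma\geq 1/m$) and using monotonicity (property~(2)) transfers this to $\E[\rank(T_{|I_{[d]}})]\geq(\sigma/2)^d\rank(T)$.

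\emph{Concentration.} Set $r(I):=\rank(T_{|I_{[d]}})$. Properties~(2) and~(3) make $r$ monotone non-decreasing and $1$-Lipschitz in the Bernoulli indicators. For such functions on product Bernoulli spaces, one expects a Talagrand-type lower-tail bound
\[
\Pr[\,r(I)\leq M-t\,]\;\leq\; C\exp\!\big(-t^2/(c\,M)\big),
\]
where $M$ is the median of $r(I)$ and $c$ depends on $d$. Because such concentration forces $|\E r-M|=O(\sqrt{M})$, the expectation bound from the previous step gives $M\geq c_2\,\rank(T)$ once $\rank(T)$ exceeds a constant threshold, and taking $t=M/2$ yields $\Pr[r(I)\leq c_2\rank(T)/2]\leq C\exp(-\kappa\,\rank(T))$. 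For $\rank(T)$ below the threshold the desired inequality is trivially satisfied by choosing $C$ large.

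\emph{Main obstacle.} Talagrand's bound in the form above rests on a certifiability property: for each $I$ with $r(I)=s$, one needs a subset $I^\ast\subseteq I$ with $r(I^\ast)\geq s$ and $|I^\ast_1|+\cdots+|I^\ast_d|=O(s)$. A greedy procedure -- remove elements of $I$ as long as the rank stays $\geq s$ -- produces $I^\ast$ with $r(I^\ast)=s$ in which every remaining element is essential, and Lipschitz applied to emptying a single direction gives the easy bound $|I^\ast_i|\geq s$. The delicate direction is the matching upper bound $|I^\ast_i|=O(s)$: for matrix rank it follows because a submatrix with every row and column essential has exactly $s$ rows and $s$ columns, and it plausibly follows in general by combining sub-additivity (to conclude that each essential element in direction $i$ contributes exactly one to the rank of the appropriate single-slice) with minimality of the decomposition. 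Nailing down this structural lemma for all natural rank functions -- or bypassing it by a concentration argument that uses sub-additivity plus independence of several copies $I^{(1)},\dots,I^{(K)}$ (e.g.\ via a covering and union bound on $T_{|\bigcup_k I^{(k)}_{[d]}}$) -- is the technical heart of the proof.
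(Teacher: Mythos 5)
Your proposal has a genuine gap, and it sits exactly where you flag it: the concentration step. For a function of the Bernoulli indicators that is merely monotone and $1$-Lipschitz, the only unconditional tail bound available is of bounded-difference (McDiarmid) type, with deviation scale $\sqrt{n_1+\cdots+n_d}$ — useless here, since $\rank(T)$ can be far smaller than the dimensions. The Talagrand-type lower tail at scale $\sqrt{M}$ that you invoke requires precisely the certifiability property you describe: every $I$ with $r(I)\geq s$ must contain a witness $I^\ast\subseteq I$ with $|I^\ast_1|+\cdots+|I^\ast_d|=O(s)$ and $r(I^\ast)\geq s$. For a natural rank function this is (by monotonicity, applied to $T_{|I_{[d]}}$) nothing other than the linear core property of Definition~\ref{def:core}, which is known for matrix rank but is an open conjecture of Karam~\cite{Karam2022} for slice, partition and analytic rank; the known bounds are far from linear (e.g.\ $A(r)=\exp(O(r))$ for partition rank, $B(r)=\Omega(r^{1/3})$ for slice rank of $3$-tensors). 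Your heuristic that a minimal essential restriction has $|I^\ast_i|=O(s)$ in each direction does not follow from sub-additivity plus Lipschitzness — indeed Section~\ref{sec:discussion} of the paper proves exactly the conditional statement you are after (Theorem~\ref{thm:restriction_core}: linear core property $\Rightarrow$ random restriction theorem, by an argument like your matrix sketch), which is why the unconditional theorem cannot be obtained this way at present. Your expectation lower bound via the grid partition and sub-additivity is correct (and mirrors the paper's coupling argument, as well as Lemma~\ref{lem:expectation} in the polynomial setting), but an expectation bound plus monotone-Lipschitz concentration at scale $\sqrt{n}$ does not close the proof.

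The paper avoids certifiability altogether by exploiting sub-additivity \emph{inside} the concentration argument rather than through a witness set, and by restricting one direction at a time. Fixing the other directions, the map $J\mapsto \rank\big(T_{|J\times\prod_{j\neq i}[n_j]}\big)$ is genuinely sub-additive on $\{0,1\}^{n_i}$ (disjointly supported row sets decompose the tensor as a sum), monotone and $1$-Lipschitz. Lemma~\ref{lem:subadditive} then applies Talagrand's two-point-distance inequality (Theorem~\ref{thm:Talagrand}) to the sublevel set $A=\{f\leq \sigma r/4\}$: if $h_2(x;A)$ is small, writing $x$ in terms of two points $y,z\in A$ and using sub-additivity plus the Lipschitz property forces $f(x)<2\sigma r/3$; a coupling argument (a random ordered partition into $\lceil 1/\sigma\rceil$ parts) shows $\Pr[f\geq 2\sigma r/3]\geq 2\sigma/3$, and rearranging Talagrand's bound gives an exponentially small measure for $A$ — no median-versus-mean comparison and no certificate is ever needed. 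Iterating over the $d$ directions with a union bound yields the theorem with $\kappa$ of order $(\sigma/4)^d$. If you want to salvage your route, you would either have to prove the linear core property (open), or replace your generic lower-tail bound by an argument that, like the paper's, feeds sub-additivity directly into Talagrand's $h_2$ inequality.
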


We note that our proof also obtains good quantitative bounds for the parameters in the theorem, namely
\begin{align*}
C = d\sqrt{\frac{3}{2\sigma}} \quad &\text{and} \quad \kappa = \frac{\ln 2}{3} \Big(\frac{\sigma}{4}\Big)^d \quad \text{if } \sigma \in (0, 1/2), \\
C = d\sqrt{2} \quad &\text{and} \quad \kappa = \frac{\ln 2}{2} \Big(\frac{1}{6}\Big)^d \quad \text{if } \sigma \in [1/2, 1].
\end{align*}
Note that one must always have $\kappa \leq \sigma^d$, as can be seen by considering a diagonal tensor~$T$ and the slice rank function~$\srank$, which for diagonal tensors equals the size of their support \cite[Lemma~1]{Tao:srank}.

\subsection{Concentration for monotone sub-additive functions}

The main step in our proof of Theorem~\ref{thm:random_tensor} is a type of concentration inequality for monotone sub-additive functions on the hypercube.

In order to obtain such a result we will consider a notion of \emph{two-point distance} from sets $A\subseteq \bset{n}$, which intuitively measures how many coordinates of some given point $x$ cannot be captured by any two elements of~$A$.

\begin{definition}
Given a point $x\in \bset{n}$ and a set $A \subseteq \bset{n}$, the two-point distance between $x$ and $A$ is
\beqn
h_2(x; A) = \min_{y, z \in A} \big|\big\{i \in [n] \st x_i \neq y_i \And x_i \neq z_i\big\}\big|.
\eeqn
\end{definition}

Note that this ``distance'' $h_2(x; A)$ can be zero even if $x \notin A$;
for instance, $h_2\big(00;\, \{01, 10\}\big) = 0$.
The reason for considering this notion is that one gets much better concentration for $h_2$ than one does for the usual Hamming distance.
This is shown by the following result, which is a special case of an inequality of Talagrand~\cite[Theorem 3.1.1]{Talagrand:1995}:

\begin{theorem}\label{thm:Talagrand}
For any parameter $\sigma\in [0,1]$ and set $A \subseteq \bset{n}$, we have that
\beqn
\Pr_{x\sim \pi_\sigma^n}\big[h_2(x; A) \geq k\big] \leq 2^{-k} \pi_\sigma^n(A)^{-2} \quad \text{for all $k\geq 0$.}
\eeqn
\end{theorem}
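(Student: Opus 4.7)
The plan is to first apply Markov's inequality, which reduces the statement to the exponential moment bound
\[
\Exp_{x \sim \pi_\sigma^n}\bigl[2^{h_2(x; A)}\bigr] \leq \pi_\sigma^n(A)^{-2}.
\]
I would prove this by induction on $n$, with the trivial base case $n=0$. For the inductive step, I decompose $A$ according to its last coordinate: let $A_i = \{y' \in \{0,1\}^{n-1} : (y', i) \in A\}$ for $i \in \{0,1\}$, with respective probabilities under $\pi_\sigma^{n-1}$. For a point $x = (x', x_n)$, one obtains three natural upper bounds on $h_2((x', x_n); A)$ by restricting the choice of $y, z \in A$ in its definition: taking both in $A_{x_n}$ yields $h_2(x'; A_{x_n})$; taking both in $A_{1-x_n}$ yields $h_2(x'; A_{1-x_n}) + 1$; and taking one from each of $A_0$ and $A_1$ yields a \emph{mixed} quantity that is not naturally expressible in terms of $h_2$ on a single subset. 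A direct attempt using only the first two bounds fails (e.g.\ by Jensen's inequality in the convex variable $1/r^2$), so the mixed bound is essential.

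To accommodate the mixed bound, I would strengthen the inductive hypothesis to the following \emph{cross version}: for any $A, B \subseteq \{0,1\}^n$,
\[
\Exp_{x \sim \pi_\sigma^n}\bigl[2^{\tilde h(x; A, B)}\bigr] \leq \frac{1}{\pi_\sigma^n(A)\,\pi_\sigma^n(B)}, \quad \text{where} \quad \tilde h(x; A, B) = \min_{y \in A,\, z \in B}\bigl|\{i : x_i \neq y_i \text{ and } x_i \neq z_i\}\bigr|.
\]
Setting $A = B$ recovers the original inequality. The induction for the cross version proceeds by decomposing both $A$ and $B$ by their last coordinate simultaneously (yielding four slice-pairs $(A_i, B_j)$), enumerating the corresponding four upper bounds on $\tilde h((x', x_n); A, B)$ according to the choice of $(y_n, z_n) \in \{0,1\}^2$, combining them via a weighted geometric mean, and applying H\"older's inequality to factor the expectation over the first $n-1$ coordinates.

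The main obstacle will be choosing the weights in the geometric mean so that, after substituting the inductive bounds $\Exp\bigl[2^{\tilde h(x'; A_i, B_j)}\bigr] \leq 1/(\pi_\sigma^{n-1}(A_i)\,\pi_\sigma^{n-1}(B_j))$, the resulting expression collapses to the required $1/(\pi_\sigma^n(A)\,\pi_\sigma^n(B))$. In particular, the penalty factor of $2$ from the ``diagonal'' configuration (where both $y_n$ and $z_n$ disagree with $x_n$, contributing $+1$ to $\tilde h$) must be offset by the savings from pairing different slices; weights proportional to the product measures $\pi_\sigma(\{y_n\})\,\pi_\sigma(\{z_n\})\,\pi_\sigma^{n-1}(A_{y_n})\,\pi_\sigma^{n-1}(B_{z_n})$ are a natural candidate. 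Finally, corner cases where one of the slices $A_i$ or $B_j$ is empty must be handled separately: in those cases only two of the four minimization options are available, and the required bound reduces to elementary algebraic inequalities on $[0,1]$ such as $(1+\sigma)(1-\sigma)^2 \leq 1$ and $\sigma^2(2-\sigma) \leq 1$.
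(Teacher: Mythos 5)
The paper does not actually prove this statement: it is quoted directly as a special case of Talagrand's ``control by two points'' inequality, with a citation to \cite{Talagrand:1995}. So what you are proposing is to reprove that inequality for the biased cube, and your overall plan is the standard one and essentially Talagrand's own: reduce via Markov to the exponential-moment bound, induct on $n$, and strengthen the inductive statement to the two-set quantity $\tilde h(x;A,B)$ so that mixed choices of $y$ and $z$ from different slices can be handled. Your observation that the naive recursion using only the two ``pure'' options cannot close the induction is also correct.

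The genuine gap is that the entire difficulty of the theorem sits in the step you leave open, and the one concrete suggestion you make for it fails. With weights $\lambda_{ij}$ proportional to $\pi_\sigma(i)\pi_\sigma^{n-1}(A_i)\,\pi_\sigma(j)\pi_\sigma^{n-1}(B_j)$ (independent of $x_n$), the geometric-mean/H\"older step followed by the inductive hypothesis gives, writing $a_i=\pi_\sigma^{n-1}(A_i)$, $b_j=\pi_\sigma^{n-1}(B_j)$, $\alpha_i=\pi_\sigma(i)a_i/\mu(A)$, $\beta_j=\pi_\sigma(j)b_j/\mu(B)$, the bound
\[
\Big(\prod_i a_i^{-\alpha_i}\Big)\Big(\prod_j b_j^{-\beta_j}\Big)\Big((1-\sigma)2^{\alpha_1\beta_1}+\sigma\,2^{\alpha_0\beta_0}\Big),
\]
and already in the case $A_0=A_1$, $B_0=B_1$, $\sigma=1/2$ this equals $2^{1/4}/\big(\mu(A)\mu(B)\big)$, strictly larger than the required $1/\big(\mu(A)\mu(B)\big)$; so with these weights the induction does not close. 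The weights must depend on $x_n$ and on the slice measures (in particular they must vanish on the penalized diagonal option when the slices coincide). Equivalently, one can bound the conditional expectation given $x_n=t$ by the \emph{minimum} of the four inductive bounds and then prove the elementary two-point inequality
\[
\sum_{t\in\{0,1\}}\pi_\sigma(t)\,\min\Big(\min_{(i,j)\neq(1-t,1-t)}\frac{1}{a_ib_j},\ \frac{2}{a_{1-t}b_{1-t}}\Big)\ \leq\ \frac{1}{\mu(A)\,\mu(B)},
\]
where $\mu(A)=(1-\sigma)a_0+\sigma a_1$ and $\mu(B)=(1-\sigma)b_0+\sigma b_1$; this optimization (with the appropriate conventions when a slice is empty) is exactly the delicate heart of Talagrand's argument, and your proposal does not supply it. Your closing remarks about empty slices and the inequalities $(1+\sigma)(1-\sigma)^2\le 1$ and $\sigma^2(2-\sigma)\le 1$ address only degenerate corner cases, not this main inequality, so as written the proof is incomplete at its crucial step.
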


The next lemma is the key result needed for proving our main theorem for tensors, and it deals more abstractly with monotone, sub-additive Lipschitz functions on the hypercube.
We endow~$\bset{n}$ with the usual partial order, where $x\leq y$ if the support of~$x$ is contained in the support of~$y$.
A function $f:\bset{n} \to \R$ is monotone if $f(x) \leq f(y)$ whenever $x\leq y$, and it is sub-additive if $f(x+y) \leq f(x) + f(y)$ for all $x,y\in\bset{n}$ with  disjoint supports.
Finally,~$f$ is $1$-Lipschitz if
\beqn
\big|f(x) - f(y)\big| \leq \big|\big\{i \in [n] \st x_i \neq y_i\big\}\big| \quad \text{for all $x, y \in \bset{n}$}.
\eeqn
For a string $x\in \bset{n}$ and set $I\subseteq[n]$, we let~$x_I\in \bset{n}$ be the string that equals~$x$ on the indices in~$I$ and is zero elsewhere.
The lemma that follows and its proof are inspired by an argument of Schechtman~\cite[Corollary~12]{Schechtman2003}.

\begin{lemma}[Concentration inequality] \label{lem:subadditive}
Let $f:\bset{n} \to \R_+$ be a monotone, sub-additive $1$-Lipschitz function such that $f({\bf 0}) =0$ and $f({\bf 1}) = r$.
Then,
\begin{align*}
    \Pr_{x\sim \pi_{\sigma}^n} \big[f(x) \leq \sigma r/4\big] &\leq \sqrt{\frac{3}{2\sigma}}\, 2^{-\sigma r/12} \quad &\text{if $\,0 < \sigma < 1/2$,} \\
    \Pr_{x\sim \pi_{\sigma}^n} \big[f(x) \leq r/6\big] &\leq \sqrt{2}\, 2^{-r/12} &\text{if $\,1/2 \leq \sigma \leq 1$.}
\end{align*}
\end{lemma}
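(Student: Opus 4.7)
The plan is to combine three ingredients: a deterministic decomposition inequality that controls $f(x)$ in terms of the two-point distance $h_2$, a lower bound on $\Exp_{\pi_\sigma^n}[f]$ via iterated sub-additivity, and Talagrand's inequality (Theorem~\ref{thm:Talagrand}) to convert these into a tail bound on the sub-level set $\{f\le M\}$ for the threshold $M$ appearing in the lemma.

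For the decomposition step, inspired by Schechtman's argument, I would fix any $A\subseteq \bset{n}$ and any $x\in \bset{n}$, pick $y,z\in A$ achieving the minimum in the definition of $h_2(x;A)$, and set $S = \{i : x_i\ne y_i \text{ and } x_i\ne z_i\}$, so that $|S|=h_2(x;A)$. On $[n]\setminus S$ every coordinate of $x$ agrees with $y$ or $z$, so I can partition this complement into $I_y=\{i\notin S : x_i = y_i\}$ and $I_z=([n]\setminus S)\setminus I_y$. Writing $x = x_{I_y} + x_{I_z} + x_S$, the three summands have pairwise disjoint supports and satisfy $x_{I_y}\le y$ and $x_{I_z}\le z$ coordinate-wise. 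Sub-additivity, monotonicity, and the $1$-Lipschitz property applied to $x_S$ then yield
\[
f(x) \;\le\; f(x_{I_y}) + f(x_{I_z}) + f(x_S) \;\le\; f(y) + f(z) + |S| \;\le\; 2\sup_{w\in A} f(w) + h_2(x;A).
\]

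To lower-bound $\Exp_{\pi_\sigma^n}[f]$, I would draw $k$ i.i.d.\ samples $X_1,\dots,X_k\sim \pi_\sigma^n$ and peel them into disjoint sets $Y_i := X_i\setminus\bigcup_{j<i}X_j$, so that $\bigcup_i Y_i = \bigcup_i X_i \sim \pi_{\sigma_k}^n$ with $\sigma_k=1-(1-\sigma)^k$. Sub-additivity and monotonicity give $f(\mathbf{1}_{\bigcup_i X_i}) \le \sum_i f(Y_i) \le \sum_i f(X_i)$, so taking expectations yields $\Exp_{\pi_{\sigma_k}^n}[f] \le k\cdot \Exp_{\pi_\sigma^n}[f]$. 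For any $\sigma'\ge 1/2$, a complementary-coupling argument (using $r = f(\mathbf{1}) \le f(x) + f(\mathbf{1}-x)$ together with the fact that a $\pi_{\sigma'}^n$-sample stochastically dominates its complement) gives $\Exp_{\pi_{\sigma'}^n}[f] \ge r/2$. Taking $k$ minimal with $\sigma_k\ge 1/2$ (so $k=1$ when $\sigma\ge 1/2$) then yields $\Exp_{\pi_\sigma^n}[f] \ge r/(2k)$.

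To finish, I would set $A=\{f\le M\}$ with $M$ the threshold in the lemma and write $p=\pi_\sigma^n(A)$. The decomposition bound yields $\Exp[f] \le 2M + \Exp[h_2(\cdot;A)]$, while Talagrand's inequality $\Pr[h_2\ge t] \le 2^{-t}/p^2$ integrates via $\sum_{t\ge 1}\min(1,2^{-t}/p^2)$ to $\Exp[h_2]\le 2\log_2(1/p) + O(1)$. Combining this with the lower bound on $\Exp[f]$ produces
\[
\log_2(1/p) \;\ge\; \tfrac{1}{2}\Big(\tfrac{r}{2k}-2M\Big)-O(1),
\]
and plugging in the two regime-dependent choices of $M$ and $k$ translates this into the two stated inequalities. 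The main obstacle will be tracking the constants precisely in the $\sigma\in(0,1/2)$ regime: one needs $k$ (approximately $\ln 2/\sigma$) chosen so that the margin $r/(2k)-2M$ remains a positive multiple of $\sigma r$, and the Talagrand integration has to be performed sharply enough to yield the claimed prefactor $\sqrt{3/(2\sigma)}$.
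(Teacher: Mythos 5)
Your overall architecture is the same as the paper's: the pointwise decomposition $f(x)\le f(y)+f(z)+h_2(x;A)\le 2\sup_{w\in A}f(w)+h_2(x;A)$ is exactly the inclusion step in the paper's proof (and your verification of it is correct), and Talagrand's inequality (Theorem~\ref{thm:Talagrand}) enters in the same way. Where you deviate is in how the lower bound on $f$ under $\pi_\sigma^n$ is produced and used: the paper proves a quantile statement, $\Pr_{x\sim\pi_\sigma^n}[f(x)\ge 2\sigma r/3]\ge 2\sigma/3$, via a coupling with a uniformly random ordered partition of $[n]$ into $k=\lceil 1/\sigma\rceil$ disjoint parts, and feeds the resulting event inclusion directly into Talagrand; you instead lower-bound $\Exp_{\pi_\sigma^n}[f]$ and integrate Talagrand's tail to control $\Exp[h_2(\cdot;A)]$.

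The genuine gap is in your expectation lower bound. Building the coupling from $k$ i.i.d.\ samples and peeling, then using the complement trick at density $\ge 1/2$, only gives $\Exp_{\pi_\sigma^n}[f]\ge r/(2k)$ with $k$ the least integer such that $(1-\sigma)^k\le 1/2$. For $\sigma\in(1/3,1/2)$ this forces $k=2$, so your bound is $r/4$, and the margin in your final inequality is $r/4-2M=(1-2\sigma)r/4$, giving an exponent of order $(1-2\sigma)r/8$. This is strictly weaker than the claimed $\sigma r/12$ once $\sigma>3/8$ and degenerates to $0$ as $\sigma\to 1/2^-$, so the proposal as written does not prove the stated inequality on that range; this is not a matter of sharper bookkeeping but of the coupling being too lossy (the union of i.i.d.\ samples wastes density, and the complement trick caps you at $r/2$ regardless of how large you take $k$). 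The fix is to replace the i.i.d.\ peeling by the disjoint-partition coupling: partitioning $[n]$ uniformly into $k=\lceil 1/\sigma\rceil$ parts gives $k\,\Exp_{\pi_{1/k}^n}[f]\ge f(\mathbf 1)=r$, hence by monotonicity $\Exp_{\pi_\sigma^n}[f]\ge r/\lceil 1/\sigma\rceil\ge 2\sigma r/3$ for $\sigma<1/2$; with $M=\sigma r/4$ the margin becomes $\sigma r/6$ and your expectation route then recovers the exponent $\sigma r/12$ (with a constant prefactor of about $\sqrt 2$ after a careful summation of the Talagrand tail, which is even slightly stronger than $\sqrt{3/(2\sigma)}$; the same accounting also fixes the prefactor in your $\sigma\ge 1/2$ case, where your current $O(1)$ gives $2$ rather than the stated $\sqrt 2$). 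Alternatively, drop expectations entirely and use the quantile form as the paper does.
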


\begin{proof}
We first prove the first inequality above.
Let $\sigma\in (0, 1/2)$, and consider the set
\beqn
A = \big\{ x\in \bset{n} \st f(x)\leq \sigma r/4 \big\}.
\eeqn
By Talagrand's Inequality (Theorem~\ref{thm:Talagrand}) we have
\beqn
\Pr_{x\sim \pi_{\sigma}^n}\big[ h_2(x; A)\geq \sigma r/6\big] \leq 2^{-\sigma r/6} \,\Pr_{x\sim \pi_{\sigma}^n}\big[ f(x)\leq \sigma r/4\big]^{-2},
\eeqn
so to finish the proof it suffices to show that
\beqn
\Pr_{x\sim \pi_{\sigma}^n}\big[ h_2(x; A)\geq \sigma r/6\big] \geq 2\sigma/3.
\eeqn

We claim that
\beq \label{eq:h2inclusion}
\big\{ x\in \bset{n} \st f(x)\geq 2\sigma r/3 \big\} \subseteq \big\{ x\in \bset{n} \st h_2(x; A)\geq \sigma r/6 \big\}.
\eeq
Indeed, if $h_2(x; A) < \sigma r/6$ then there are $y, z\in A$ such that
\beqn
\big|\big\{i \in [n] \st x_i \neq y_i \And x_i \neq z_i\big\}\big| < \sigma r/6.
\eeqn
Denote $I = \big\{i \in [n] \st x_i \neq y_i \And x_i \neq z_i\big\}$, $J = \big\{i \in [n] \st x_i = y_i\big\}$ and $J' = \big\{i \in [n] \st x_i \neq y_i \And x_i = z_i\big\}$;
note that these sets partition~$[n]$, and by assumption $|I| < \sigma r/6$.
We then have
\begin{align*}
    f(x) &\leq f(x_I + x_J) + f(x_{J'}) \\
    &= f(x_I + y_J) + f(z_{J'}) \\
    &\leq |I| + f(y_J) + f(z_{J'}) \\
    &\leq |I| + f(y) + f(z) \\
    &< 2\sigma r/3,
\end{align*}
so $\big\{h_2(x; A) < \sigma r/6\big\} \subseteq \big\{f(x) < 2\sigma r/3\big\}$ and inclusion \eqref{eq:h2inclusion} follows.
It thus suffices to show that $\Pr_{x\sim \pi_{\sigma}^n}\big[ f(x)\geq 2\sigma r/3\big] \geq 2\sigma/3$.

We will next prove that, for any integer $k\geq 1$, we have
\beq \label{eq:coupling}
\Pr_{x\sim \pi_{1/k}^n}\big[ f(x)\geq r/k\big] \geq 1/k.
\eeq
This is done by a simple \emph{coupling argument}, which will be important for us again later on.
Consider a uniformly random ordered $k$-partition $(I_1, \dots, I_k)$ of $[n]$;
thus the $k$ sets $I_i$ are pairwise disjoint and have union $[n]$, with each of the $k^n$ possible such $k$-tuples having the same probability.
Denoting by $\1_I$ the indicator function of set $I$, we have
\beqn
r = f(\1_{[n]}) = f(\1_{I_1} + \dots + \1_{I_k}) \leq \sum_{i=1}^k f(\1_{I_i}),
\eeqn
so $f(\1_{I_i}) \geq r/k$ holds for at least one $i\in [k]$ in \emph{every} ordered partition $(I_1, \dots, I_k)$.
By symmetry, it follows that $\Pr \big[f(\1_{I_1}) \geq r/k \big] \geq 1/k$.
Since the marginal distribution of $\1_{I_1}$ (and every other $\1_{I_i}$) is precisely $\pi_{1/k}^n$, we conclude that
\begin{equation*}
\Pr_{x\sim \pi_{1/k}^n} \big[f(x) \geq r/k\big] = \Pr \big[f(\1_{I_1}) \geq r/k \big] \geq 1/k,
\end{equation*}
as wished.

Now let $k = \lceil 1/\sigma \rceil$.
Since $0 < \sigma < 1/2$, we have $2\sigma/3 \leq 1/k \leq \sigma$, and so by monotonicity of $f$
\begin{align*}
    \Pr_{x\sim \pi_{\sigma}^n}\big[ f(x)\geq 2\sigma r/3\big] &\geq \Pr_{x\sim \pi_{1/k}^n}\big[ f(x)\geq 2\sigma r/3\big] \\
    &\geq \Pr_{x\sim \pi_{1/k}^n}\big[ f(x)\geq r/k\big].
\end{align*}
From inequality~\eqref{eq:coupling} we then conclude that
\beqn
\Pr_{x\sim \pi_{\sigma}^n}\big[ f(x)\geq 2\sigma r/3\big] \geq 1/k \geq 2\sigma/3,
\eeqn
finishing the proof of the first inequality in the statement of the lemma.

The second inequality is proven using the same arguments, now applied to the parameter $\sigma = 1/2$ and the set $A = \big\{ x\in \bset{n} \st f(x)\leq r/6 \big\}$.
By the same argument as above,
\begin{equation*}
    \big\{x\in \bset{n}\st f(x) \geq r/2\big\} \subseteq \big\{x\in \bset{n}\st h_2(x;A) \geq r/6\big\}.
\end{equation*}
Talagrand's inequality and coupling imply that
\begin{align*}
    2^{-r/6}\Pr_{x\sim\pi_{1/2}^n}\big[f(x) \leq r/6\big]^{-2} &\geq \Pr_{x\sim \pi_{1/2}^n}\big[h_2(x;A) \geq r/6\big]\\
    &\geq \Pr_{x\sim\pi_{1/2}^n}\big[f(x) \geq r/2\big]\\
    &\geq \frac{1}{2}.
\end{align*}
Rearranging then gives the result for~$\sigma =  1/2$;
we obtain slightly better bounds since in this case $1/\sigma = 2$ is an integer, and so no rounding errors occur.
By monotonicity of $f$, the same bound continues to hold for all $\sigma > 1/2$.
\end{proof}

\subsection{The Random Restriction Theorem}

It is now a simple matter to use Lemma~\ref{lem:subadditive} to prove the Random Restriction Theorem for tensors.

\begin{proof}[ of Theorem~\ref{thm:random_tensor}]
Let $\rank: (\F^{\infty})^{\otimes d} \to \R$ be a natural rank function, and $T \in \bigotimes_{i=1}^d \F^{[n_i]}$ be a tensor.
We wish to show that
\beqn
\Pr_{I_1\sim [n_1]_{\sigma}, \dots, I_d\sim [n_d]_{\sigma}}\big[ \rank \big(T_{|I_{[d]}}\big) \geq \kappa(\sigma) \cdot \rank(T)\big] \geq 1 - C(\sigma) e^{-\kappa(\sigma) \rank(T)}
\eeqn
for some well-chosen constants $C(\sigma), \kappa(\sigma) > 0$ depending only on the value of $\sigma > 0$ and the order $d$ of the tensor.
We consider here the case where $\sigma < 1/2$, as the case where $\sigma \geq 1/2$ is analogous.\footnote{This second case is also an immediate consequence of first case together with monotonicity of $\rank$ under restrictions.}

Define the function $f_1: \bset{n_1} \to \R_+$ as follows:
for $x \in \bset{n_1}$ with support $J$, $f_1(x)$ is the rank of the subtensor of $T$ whose indices of the first row belong to $J$.
In formula:
\beqn
f_1(\1_{J}) = \rank\big( T_{|J \times \prod_{j=2}^d [n_j]} \big) \quad \text{for all } J\subseteq [n_1].
\eeqn
By the definition of natural rank, $f_1$ is a monotone, sub-additive $1$-Lipschitz function with maximum value $\rank(T)$.
Since $\1_J \sim \pi_{\sigma}^{n_1}$ when $J\sim [n_1]_{\sigma}$, it follows from Lemma~\ref{lem:subadditive} that
\beq \label{eq:f1}
\Pr_{I_1\sim [n_1]_{\sigma}}\bigg[ \rank\big( T_{|I_1 \times \prod_{j=2}^d [n_j]} \big) \geq \frac{\sigma \rank(T)}{4}\bigg] \geq 1 - \sqrt{\frac{3}{2\sigma}}\, 2^{-\frac{\sigma}{12} \rank(T)}.
\eeq

For any fixed $I_1 \subseteq [n_1]$ satisfying $f_1(\1_{I_1}) \geq \sigma \rank(T)/4$, we define the function $f_2: \bset{n_2} \to \R_+$ by
\beqn
f_2(\1_{J}) = \rank\big( T_{|I_1 \times J \times \prod_{j=3}^d [n_j]} \big) \quad \text{for all } J\subseteq [n_2].
\eeqn
This function is again monotone, sub-additive and $1$-Lipschitz, and it has maximum value at least~$\sigma \rank(T)/4$.
By Lemma~\ref{lem:subadditive} we have
\beqn
\Pr_{I_2\sim [n_2]_{\sigma}}\bigg[ \rank\big( T_{|I_1 \times I_2 \times \prod_{j=3}^d [n_j]} \big) \geq \Big(\frac{\sigma}{4}\Big)^2 \rank(T)\bigg] \geq 1 - \sqrt{\frac{3}{2\sigma}}\, 2^{-\frac{\sigma}{12} \frac{\sigma}{4} \rank(T)}.
\eeqn
Since this holds whenever $f_1(\1_{I_1}) \geq \sigma \rank(T)/4$, taking the union bound together with inequality~\eqref{eq:f1} we obtain
\beqn
\Pr_{I_1\sim [n_1]_{\sigma}, I_2\sim [n_2]_{\sigma}}\bigg[ \rank\big( T_{|I_1 \times I_2 \times \prod_{j=3}^d [n_j]} \big) \geq \Big(\frac{\sigma}{4}\Big)^2 \rank(T)\bigg] \geq 1 - 2\sqrt{\frac{3}{2\sigma}}\, 2^{-\frac{\sigma}{12} \frac{\sigma}{4} \rank(T)}.
\eeqn
Proceeding in this same way for each row of the tensor, and always taking the union bound, we eventually conclude that
\beqn
\Pr_{I_1\sim [n_1]_{\sigma}, \dots, I_d\sim [n_d]_{\sigma}}\bigg[ \rank \big(T_{|I_{[d]}}\big) \geq \Big(\frac{\sigma}{4}\Big)^d \rank(T)\bigg] \geq 1 - d\sqrt{\frac{3}{2\sigma}}\, 2^{-\frac{\sigma}{12} (\frac{\sigma}{4})^{d-1} \rank(T)},
\eeqn
which finishes the proof.
\end{proof}

\section{Polynomial maps}
\label{sec:polynomials}

Next we consider the setting of polynomials and higher-dimensional polynomial maps.
Recall that $\Pol_{\leq d}(\F^n, \F^k)$ is the space of all polynomial maps~$\phi: \F^n \to \F^k$ of degree at most~$d$, and $\Pol_{\leq d}(\F^{\infty}, \F^k) = \bigcup_{n\in \N} \Pol_{\leq d}(\F^n, \F^k)$ is the space of all $k$-dimensional polynomial maps over~$\F$ of degree at most~$d$.

\begin{definition}[Natural rank]
We say that $\rank: \Pol_{\leq d}(\F^{\infty}, \F^k) \to \R_+$ is a natural rank function if $\rank({\bf 0}) = 0$ and it satisfies the following properties:
\begin{enumerate}
    \item Symmetry: \\
    $\rank(\phi) = \rank(-\phi)$ for all~$\phi\in \Pol_{\leq d}(\F^{\infty}, \F^k)$.
    \item Sub-additivity: \\
    $\rank(\phi + \gamma) \leq \rank(\phi) + \rank(\gamma)$ for all~$\phi, \gamma\in \Pol_{\leq d}(\F^{\infty}, \F^k)$.
    \item Monotonicity under restrictions: \\
    $\rank(\phi_{|I}) \leq \rank(\phi)$ for all~$\phi\in \Pol_{\leq d}(\F^{\infty}, \F^k)$ and all sets~$I \subset \N$.
    \item Restriction Lipschitz property: \\
    $\rank(\phi_{|I \cup J}) \leq \rank(\phi_{|I}) + |J|$ for all~$\phi\in \Pol_{\leq d}(\F^{\infty}, \F^k)$ and all sets~$I, J \subset \N$.
\end{enumerate}
\end{definition}

Below we discuss some natural rank functions for polynomial maps which have appeared in the literature.
Symmetry holds trivially for each of the ranks discussed.

\subsubsection*{Degree rank}
Motivated by proving a version of the Gowers Inverse Theorem for polynomial phase functions, Green and Tao defined the notion of degree rank for functions over $\F_p^n$ and showed that it is closely related to equidistribution properties of polynomials~\cite{GreenT2009}.
For an integer~$d\geq 1$, the degree-$d$ rank of a function $f:\F^n\to \F$, denoted $\rank_d(f)$, is the least $r\in \N$ such that there exist polynomials $Q_1,\dots,Q_r\in \F[x_1,\dots,x_n]$ of degree at most~$d$ and a function $\Gamma:\F^r\to \F$ such that~$f$ can be decomposed as $f(x) = \Gamma\big(Q_1(x),\dots,Q_r(x)\big)$.
A slightly modified version of this rank function, where one also allows ``nonclassical polynomials'' to enter the decomposition, was instrumental in the proof of the general Gowers Inverse Theorem over~$\F_p^n$ by Tao and Ziegler~\cite{TaoZ12}.

It turns out that, restricted to polynomials of degree at most~$d$, the function $\tfrac{1}{2}\rank_{d-1}$ is a natural rank function.
(The need to divide by~$2$ is a normalization matter which has no important impact.)
Sub-additivity of this rank function follows directly since the sum of decompositions of two polynomials~$P, Q$ gives a decomposition of their sum~$P+Q$.
Monotonicity follows since a decomposition of~$P$ induces a decomposition of any restriction~$P_{|I}$.
The restriction Lipschitz property can be verified inductively using the fact that for a polynomial $P\in \F[x_1,\dots,x_n]$ of degree at most~$d$, we have that $P = P_{|[n-1]} + x_n P'$ for some polynomial~$P'$ of degree at most~$d-1$.
This shows that $\rank_{d-1}(P) \leq \rank_{d-1}(P_{|[n-1]}) + 2$, where the extra~2 comes from the polynomials~$P'$ and~$x_n$.

\subsubsection*{Schmidt rank}
A refinement of the notion of degree-$d$ rank was introduced by Schmidt in~\cite{Schmidt:1985}, which was rediscovered independently on multiple occasions later on.
The same notion, specialized to cubic polynomials, was defined in~\cite{DerksenES:2017} and referred to as q-rank, while in~\cite{KazhdanZ:cohom} it was defined for homogeneous polynomials and referred to simply as rank.
It appeared under the name strength in~\cite{AnanyanH:2020}, it was studied in~\cite{HaramatyS2010} without being explicitly defined and then again in~\cite{Hatami2016}, where it was called strong rank.

The Schmidt rank of a degree-$d$ polynomial $P\in \F[x_1,\dots,x_n]$, which we will denote $\prank(P)$ to reflect its similarity with partition rank for tensors, is the least~$r\in \N$ such that~$P$ has a decomposition of the form $P = Q_1R_1 + \cdots Q_rR_r + Q_{r+1}$
where~$Q_i,R_i$ are non-constant polynomials such that $\deg(Q_i) + \deg(R_i) \leq d$ and $\deg(Q_{r+1}) \leq d-1$.
Sub-additivity and monotonicity follow for the same reasons as for slice and partition rank of tensors.
The argument used for degree-$d$ rank shows the restriction Lipschitz property.

\subsubsection*{Analytic rank}
The next notion of rank is defined more generally for higher-dimensional polynomial maps, and requires the field~$\F$ to be finite.
It was introduced in the context of circuit complexity and error-correcting codes~\cite{BrietBCSN};
see Section~\ref{sec:discussion} for more details.
For a finite field $\F$, the analytic $d$-rank of a map~$\phi\in \Pol_{\leq d}(\F^n; \F^k)$ is defined by
\beqn
    \arank_d(\phi) = -\log_{|\F|} \bigg( \max_{\psi:\F^n\to\F^k,\, \deg(\psi)<d} \Pr_{x\in \F^n} \big[\phi(x) = \psi(x)\big] \bigg).
\eeqn
Sub-additivity, monotonicity and the restriction Lipschitz property for this rank function were proven in \cite[Section~5]{BrietBCSN}.

Finally, we note that there is also another notion of analytic rank specific to polynomials, which equals the tensor analytic rank of their associated homogeneous multilinear form.
It was introduced together with the notion of tensor analytic rank by Gowers and Wolf~\cite{GowersW2011}, and can be equivalently defined for polynomials $P$ of degree~$d$ by
$$ -\log_{|\F|} \|\chi(P)\|_{U^d}^{2^d},$$
where~$\chi$ is a nontrivial additive character of~$\F$ and $\|\cdot\|_{U^d}$ is the Gowers uniformity norm of order~$d$.
Sub-additivity and monotonicity under restrictions follow from the corresponding properties for tensor analytic rank. 
Upon normalization by a factor~$1/d$, the restriction Lipschitz property follows from the same property for the analytic rank of tensors.
This notion of rank was also important in Tao and Ziegler's proof of the Gowers Inverse Theorem over~$\F_p^n$~\cite{TaoZ12}.

\bigskip

Our main result shows that random restrictions of a high-rank polynomial map will also have high rank with high probability.
We recall below its formal statement as given in the Introduction.

\begin{theorem}[Theorem~\ref{thm:random_poly} restated]
For every~$d\in \N$ and~$\sigma,\eps\in (0,1]$, there exist constants~$\kappa = \kappa(d, \sigma)>0$ and~$R = R(d,\sigma,\eps)\in\N$ such that the following holds.
For every natural rank function~$\rank: \Pol_{\leq d}(\F^{\infty}, \F^k) \to \R$ and every map~$\phi \in \Pol_{\leq d}(\F^n, \F^k)$ with~$\rank(\phi) \geq R$, we have
\beqn
\Pr_{I\sim [n]_{\sigma}}\big[ \rank(\phi_{|I}) \geq \kappa\cdot \rank(\phi)\big] \geq 1 - \eps.
\eeqn
\end{theorem}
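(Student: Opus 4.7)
Let $g : \bset{n} \to \R_+$ be given by $g(I) = \rank(\phi_{|I})$; by the natural-rank axioms, $g$ is monotone and $1$-Lipschitz in the Hamming metric. The essential new difficulty relative to the tensor case is that $g$ is not sub-additive in the set-union sense required by Lemma~\ref{lem:subadditive}: the restriction $\phi_{|I \cup J}$ contains ``mixed'' monomials involving variables from both $I$ and $J$, and so $\phi_{|I \cup J} \ne \phi_{|I} + \phi_{|J}$ in general. This obstructs the direct transplantation of the tensor proof.

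\emph{Step 1 (Expected-rank bound via a random partition).} I would first show that
\[
F(\sigma) := \E_{I \sim [n]_\sigma}\bigl[g(I)\bigr] \;\ge\; \kappa_0\, \rank(\phi), \qquad \kappa_0 = \Omega\bigl((\sigma/d)^d\bigr).
\]
Pick $k = \lceil d/\sigma\rceil$ and let $(I_1,\ldots,I_k)$ be a uniformly random partition of $[n]$, with each coordinate sent independently to one of the $k$ classes with probability $1/k$. Decompose $\phi = \sum_{\emptyset \ne S \subseteq [k],\,|S|\le d} \phi^{(S)}$, where $\phi^{(S)}$ collects the monomials whose variable set meets precisely the classes indexed by $S$; the constraint $|S| \le d$ follows from $\deg \phi \le d$, so the sum has at most $O(k^d)$ non-zero terms. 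Sub-additivity of rank gives $\rank(\phi) \le \sum_S \rank(\phi^{(S)})$, so pigeonhole produces some $S^\ast$ with $\E[\rank(\phi^{(S^\ast)})] \ge \rank(\phi)/O(k^d)$. Möbius inversion expresses each $\phi^{(S)}$ as an alternating sum of genuine restrictions $\phi_{|\bigcup_{j \in T}I_j}$ over $T \subseteq S$; symmetry and sub-additivity then yield
\[
\rank\bigl(\phi^{(S)}\bigr) \;\le\; \sum_{T \subseteq S} \rank\bigl(\phi_{|\bigcup_{j \in T}I_j}\bigr).
\]
Since $\bigcup_{j \in T}I_j \sim [n]_{|T|/k}$ with $|T|/k \le d/k \le \sigma$, taking expectations and invoking monotonicity of $F$ yields the displayed bound.

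\emph{Step 2 (From mean to high probability).} I expect this to be the main obstacle. Applying McDiarmid to the $1$-Lipschitz function $g$ gives concentration with tails of width $O(\sqrt{n})$, which is wasteful: for natural rank functions, $\rank(\phi)$ can be bounded while $n$ is arbitrarily large---the linear polynomial $x_1 + \cdots + x_n$ has Schmidt rank $1$ regardless of $n$. The plan is to circumvent the $\sqrt{n}$ dependence by exploiting the fact that $g$ is bounded by $\rank(\phi)$ together with the monotone structure of its level sets. Concretely, the Paley--Zygmund inequality combined with $\E[g^2] \le \rank(\phi) \cdot \E[g]$ (from $g \le \rank(\phi)$) gives
\[
\Pr\bigl[g(I) \ge F(\sigma)/2\bigr] \;\ge\; \E[g]/(4\rank(\phi)) \;\ge\; \kappa_0/4.
\]
I would then seek to bootstrap this constant-probability bound to probability $\ge 1 - \eps$ using a tool from analysis of Boolean functions---for instance, a sharp-threshold argument applied to the monotone level sets $\{I : g(I) \ge t\}$, or a careful iteration of the partitioning argument of Step~1 at several nested scales---so that the resulting concentration depends on $\rank(\phi)$ rather than on $n$. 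The $\eps$-dependence is then absorbed entirely into the threshold $R(d,\sigma,\eps)$, keeping $\kappa$ dependent only on $d$ and $\sigma$ as required. Making this bootstrap work while preserving the required independence of $\kappa$ from $\eps$ is, in my estimation, the technically delicate point of the proof.
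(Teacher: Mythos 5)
Your Step 1 is correct and is a genuinely different route to the expectation bound than the paper's Lemma~\ref{lem:expectation}: instead of inducting on the degree (which in the paper forces the detour through rank functions over commutative polynomial rings and the mixed part $\phi[A,B]$ of a bipartition), you split $[n]$ into $k=\lceil d/\sigma\rceil$ random classes, write $\phi=\phi(0)+\sum_{\emptyset\neq S}\phi^{(S)}$, and recover each $\phi^{(S)}$ from honest restrictions $\phi_{|\bigcup_{j\in T}I_j}$ by M\"obius inversion, using only symmetry, sub-additivity and monotonicity. This works (do keep the constant term $\phi^{(\emptyset)}=\phi(0)$ in the decomposition, bounding $\rank(\phi_{|\emptyset})\leq \E_{I\sim[n]_\sigma}\rank(\phi_{|I})$ by monotonicity), it stays entirely within the stated axioms, and it gives an explicit $\kappa_0=\Omega_d(\sigma^d)$; the paper's inductive proof gives comparable constants but needs the ring-coefficient generalization. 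The Paley--Zygmund step is also fine, though a direct Markov-type argument using $\rank(\phi_{|I})\leq\rank(\phi)$ suffices.

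The genuine gap is exactly the point you flag at the end: the bootstrap from constant probability $\Omega_d(\sigma^d)$ to $1-\eps$ is not carried out, and it does not follow from a generic sharp-threshold statement. Monotone level-set events $\{I:\rank(\phi_{|I})\geq t\}$ need not exhibit a sharp threshold at all (they can be junta-like), so one cannot simply raise the density slightly and conclude; nor is it clear how iterating Step~1 ``at nested scales'' would raise the success probability above a constant while keeping $\kappa$ independent of $\eps$. The paper's resolution (Lemma~\ref{lem:boost}) is to make the junta obstruction the whole point: by Margulis--Russo there is a density $q\in[\sigma,1.01\sigma]$ at which the indicator $g=1[\rank(\phi_{|\cdot})\geq r]$ has total influence $O(1/\sigma)$, so Friedgut's theorem makes it $\gamma$-close to a $C_\sigma^{O(1/\gamma)}$-junta on a set $J$ of coordinates; fixing a good assignment on $J$ and using the $1$-Lipschitz property to pay only $|J|\leq C_\sigma^{1/\eps^2}$ in the rank threshold converts the constant-probability bound into probability $>1-\eps$, and this additive loss is what the hypothesis $\rank(\phi)\geq R(d,\sigma,\eps)$ absorbs (note also the slight density increase from $0.9\sigma$ to $\sigma$ built into the argument). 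Without this junta-plus-Lipschitz step (or a substitute of comparable strength), your argument only yields the conclusion with probability bounded below by a constant depending on $d$ and $\sigma$, not $1-\eps$, so the theorem as stated is not yet proved.
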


The proof of this theorem proceeds differently from the proof of Theorem~\ref{thm:random_tensor}, which gives the analogous result for tensors.
The reason for this is that the natural choice of function $f:\bset{n}\to \R_+$ to which one might apply the concentration inequality in Lemma~\ref{lem:subadditive}, given by $f(\one_I) = \rank(\phi_{|I})$ (for some rank function~$\rank$), might fail to be sub-additive as a function on the boolean hypercube while~$\rank$ is sub-additive as a natural rank function.
For instance, consider (in dimension~$k=1$) a $2n$-variate polynomial $p(x_1,\dots,x_n,y_1,\dots,y_n)$ such that each monomial contains both an~$x$ variable and a~$y$ variable.
Then the rank of~$p$ restricted only to its~$x$ variables is zero, and similarly for the restriction to the~$y$ variables, while~$\rank(p)$ can be of order~$\Theta(n)$.
This shows that~$\rank(p_{|I \cup J})$ can be much larger than $\rank(p_{|I}) + \rank(p_{|J})$, and the argument used in the tensor case breaks down.

\subsection{The proof in expectation}

The first step in our proof of Theorem~\ref{thm:random_poly} is to show that the desired result is true \emph{in expectation}, rather than with high probability.
More precisely, we wish to show an inequality of the form
\beqn
\Exp_{I\sim [n]_{\sigma}} \rank(\phi_{|I}) \geq c(\sigma, d) \rank(\phi)
\eeqn
which is valid for all degree-$d$ maps $\phi\in \Pol_{\leq d}(\F^n, \F^k)$ and all natural rank functions.

Towards proving the inequality above by induction on the degree, it will be convenient to generalize it to polynomial maps over \emph{commutative rings} rather than only fields.
The reason for doing so is that, by considering some subset of variables $\{x_a: a\in A\}$ to be constants, we might be able to decrease the degree of the associated polynomial map over the remaining variables $\{x_b: b\notin A\}$.

More formally, given a polynomial map $\phi\in \Pol_{\leq d}(\F^n, \F^k)$ and two disjoint subsets $A, B \subset [n]$, denote by $\phi[A, B]$ the sum of the monomials in~$\phi$ involving at least one variable from each~$A$ and~$B$.
Note that $\phi[A, B]$ can be regarded as a polynomial map in three different ways:
either as having coefficients in~$\F$ and variables in $\{x_i: i\in [n]\}$;
or having coefficients in the ring $\F[x_b: b\in B]$ and variables in $\{x_a: a\in A\}$;
or having coefficients in $\F[x_a: a\in A]$ and variables in $\{x_b: b\in B\}$.
The advantage of these last two representations is that the degree of $\phi[A, B]$ \emph{decreases}, since every monomial will contain formal variables which are now elements of the ring (and are thus regarded as constants).

For a commutative ring~$R$, denote by $R[x_1, \dots, x_n]_{\leq d}$ the set of formal polynomials in variables $x_1, \dots, x_n$ with coefficients over~$R$ and degree at most~$d$.
For a bipartition $[n] = A\uplus B$, define the polynomial ring $R_A = R[x_a\st a\in A]$ and consider the natural identification of $R[x_1,\dots,x_n]$ and $R_A[x_b\st b\in B]$.
Under this identification, a function $\rank: (R[x_1, \dots, x_n])^k \to \R_+$ induces a function $\rank:(R_A[x_b\st b\in B])^k\to \R_+$ in a way that preserves symmetry, sub-additivity and monotonicity under restrictions.

\begin{lemma}[Linear expectation] \label{lem:expectation}
For every~$\sigma\in (0, 1]$ and every integer~$d \geq 1$ there exists a constant~$c(\sigma, d) > 0$ such that the following holds.
Let~$n,k$ be positive integers,~$R$ be a commutative ring and $\rank: (R[x_1, \dots, x_n]_{\leq d})^k \to \R_+$ be symmetric, sub-additive and monotone under restrictions.
Then, for any map $\phi\in (R[x_1, \dots, x_n]_{\leq d})^k$ we have
\beqn
\Exp_{J\sim [n]_{\sigma}} \rank(\phi_{|J}) \geq c(\sigma, d) \rank(\phi).
\eeqn
\end{lemma}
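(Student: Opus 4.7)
The plan is to induct on the degree $d$, exploiting the lemma's freedom to work over arbitrary commutative rings. The base cases are handled directly. For $d=0$, restriction is the identity map, so $c(\sigma,0)=1$ works. For $d=1$ (affine maps), no monomial uses variables from both sides of any bipartition, so for a uniformly random $k$-partition $(I_1,\dots,I_k)$ of $[n]$ one has the exact identity $\phi = \sum_{j=1}^k \phi_{|I_j} - (k-1)\phi(0)$; sub-additivity together with the partition-symmetry coupling (as used in the proof of inequality~\eqref{eq:coupling}) then yields $\E_{J\sim [n]_{1/k}}\rank(\phi_{|J})\geq (\rank(\phi)-(k-1)\rank(\phi(0)))/k$. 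Splitting into the case $\rank(\phi(0))\geq \alpha\rank(\phi)$ (where monotonicity alone gives $\rank(\phi_{|J})\geq \rank(\phi_{|\emptyset})=\rank(\phi(0))$) and the complementary case, and then passing from density $1/k$ to arbitrary $\sigma$ via monotonicity, produces $c(\sigma,1)\geq \sigma/2$.

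For the inductive step with $\phi$ of degree at most $d$, I would sample $J\sim [n]_{1/2}$ and use the decomposition
\[
\phi = \phi_{|J} + \phi_{|J^c} - \phi(0) + \phi[J,J^c],
\]
where $\phi[J,J^c]$ collects the monomials using at least one variable from each side. Sub-additivity gives $\rank(\phi)\leq \rank(\phi_{|J})+\rank(\phi_{|J^c})+\rank(\phi[J,J^c])+\rank(\phi(0))$. The crucial observation, and the reason for the commutative-ring generality, is that $\phi[J,J^c]$ viewed as a polynomial in the variables $\{x_a:a\in J\}$ over the extended ring $R_{J^c}:=R[x_b:b\in J^c]$ has degree at most $d-1$, since each mixed monomial spends at least one of its degree budget on a $J^c$-variable. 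The inductive hypothesis over this enlarged ring, applied with an auxiliary random subset $I\sim J_{\sigma^*}$, then gives
\[
c(\sigma^*,d-1)\cdot \rank(\phi[J,J^c]) \leq \E_I \rank(\phi[J,J^c]_{|I}),
\]
and after the natural identification $\phi[J,J^c]_{|I}=\phi_{|I\cup J^c}-\phi_{|I}-\phi_{|J^c}+\phi(0)$ as an $R$-polynomial, so sub-additivity and symmetry bound the right-hand side above by $\rank(\phi_{|I\cup J^c})+\rank(\phi_{|I})+\rank(\phi_{|J^c})+\rank(\phi(0))$.

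Taking expectations over the joint law of $(J,I)$ produces a self-referential inequality linking $m(\sigma'):=\E_{J'\sim [n]_{\sigma'}}\rank(\phi_{|J'})$ at $\sigma'\in\{\sigma^*/2,\ 1/2,\ (1+\sigma^*)/2\}$. The main obstacle is extracting a positive lower bound on $m(1/2)/\rank(\phi)$ from this recursion: a naive rearrangement that replaces $m((1+\sigma^*)/2)$ by the trivial bound $\rank(\phi)$ forces $c(\sigma^*,d-1)$ to exceed some threshold greater than one, which the inductive hypothesis does not supply. I expect to circumvent this by choosing $\sigma^*$ small (so that $(1+\sigma^*)/2$ is close to $1/2$) and iterating the argument one more level so that the unfavourable term $m((1+\sigma^*)/2)$ is itself controlled by $m(1/2)$, producing a fixed-point estimate that yields an explicit positive $c(1/2,d)$ as a decreasing function of $c(\sigma^*,d-1)$.

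Finally, to pass from $\sigma=1/2$ to arbitrary $\sigma\in(0,1]$, monotonicity of $m$ handles $\sigma\geq 1/2$ immediately, while for $\sigma<1/2$ one iterates the $1/2$-random restriction $\lceil\log_2(1/\sigma)\rceil$ times and uses the independence of successive Bernoulli sub-sampling to reduce to the $\sigma=1/2$ case, incurring only a multiplicative loss that is absorbed into $c(\sigma,d)$.
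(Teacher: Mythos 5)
You have the right skeleton — induction on the degree, the commutative-ring generality, the decomposition $\phi=\phi_{|J}+\phi_{|J^c}-\phi(0)+\phi[J,J^c]$, and the observation that $\phi[J,J^c]$ drops to degree $\le d-1$ once one side's variables are absorbed into the ring — and your reductions (the $d=0,1$ base cases and the passage from $\sigma=1/2$ to general $\sigma$) are fine. But the inductive step has a genuine gap, and you have flagged it yourself: you apply the inductive hypothesis only in one direction, downsampling inside $J$ while keeping all of $J^c$, so the favourable term you produce is $\E\,\rank(\phi_{|I\cup J^c})$ at density $(1+\sigma^*)/2>1/2$. Your proposed repair — choose $\sigma^*$ small and ``iterate one more level'' to control $m\big((1+\sigma^*)/2\big)$ by $m(1/2)$ — cannot be carried out with the hypotheses at hand. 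Any inequality of the form $m(\sigma_1)\le A\,m(1/2)+\delta\,\rank(\phi)$ for $\sigma_1>1/2$ is essentially the lemma itself for degree $d$ (subsampling from density $\sigma_1$ down to $1/2$), so invoking it is circular; and it cannot be derived from symmetry, sub-additivity and monotonicity alone, because sub-additivity fails for unions of restrictions — this is exactly the paper's example of a polynomial in $x$- and $y$-variables whose every monomial mixes both blocks, where $\rank(p_{|I\cup J})$ vastly exceeds $\rank(p_{|I})+\rank(p_{|J})$. The Lipschitz property is not assumed in this lemma, and would in any case only give losses proportional to the number of added coordinates, not to $\rank(\phi)$.

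The missing idea is to apply the inductive hypothesis a \emph{second} time, in the other direction: after downsampling inside one block to get $\phi[I,B]$, this map is again of degree $\le d-1$ when viewed as a polynomial in the $B$-variables with coefficients in $R[x_a\st a\in A]$, so one can downsample inside $B$ as well, obtaining $\E_{I,J}\rank(\phi[I,J])\ge c(d-1)^2\rank(\phi[A,B])$ with $I\cup J$ distributed at density exactly $1/2$ — no density ever creeps above $1/2$, which is what makes the bookkeeping close. This is how the paper proceeds; in addition it runs a dichotomy: either every bipartition has one side of rank $\ge 2c(d)\rank(\phi)$, in which case the coupling argument behind \eqref{eq:coupling} already gives the claim, or some fixed bipartition $[n]=A\uplus B$ has both $\rank(\phi_{|A})$ and $\rank(\phi_{|B})$ below $2c(d)\rank(\phi)$, which by sub-additivity forces $\rank(\phi[A,B])>\rank(\phi)/2$ and makes all error terms $\rank(\phi_{|I}),\rank(\phi_{|J})$ pointwise small via monotonicity, yielding $c(d)=c(d-1)^2/20$. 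Until your single-sided recursion is replaced by this (or some other) two-sided argument, the fixed-point step you describe is a hope rather than a proof.
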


\begin{proof}
For simplicity, we will prove the result in the special case where $\sigma=1/2$.
Since $\phi_{|I\cap J} = (\phi_{|I})_{|J}$ and $I\cap J \sim [n]_{1/2^{j+1}}$ when $I\sim [n]_{1/2}$, $J\sim [n]_{1/2^j}$, the general case easily follows from this special case by taking constant
\beqn
c(\sigma, d) = c(1/2, d)^{\lceil\log(1/\sigma)\rceil}
\eeqn
and using monotonicity under restrictions.

Denote $c(d) := c(1/2, d)$ for ease of notation.
The proof will proceed by induction on the degree~$d$, with the base case where $d=0$ holding trivially with $c(0) = 1$.
For the inductive step, let
\beqn
c(d) = \frac{c(d-1)^2}{20}
\eeqn
and consider two different possibilities.

First, suppose that for every bipartition $[n] = A\uplus B$ we have that
\beqn
\max \big\{\rank(\phi_{|A}),\, \rank(\phi_{|B})\big\} \geq 2c(d) \rank(\phi).
\eeqn
Using a similar coupling argument as was used in the proof of Lemma~\ref{lem:subadditive}, we get that
\begin{align*} \Pr_{I\sim[n]_{1/2}}\big[\rank\big(\phi_{|I}\big) \geq 2c(d)\rank(\phi)\big] \geq \frac{1}{2},
\end{align*}
which implies
\begin{equation*}
    \Exp_{I\sim [n]_{1/2}}\rank\big(\phi_{|I}\big) \geq c(d)\rank(\phi)
\end{equation*}
as desired.

The second possibility is that there exists a bipartition $[n] = A\uplus B$ such that
\beq \label{eq:second_case}
\rank(\phi_{|A}) < 2c(d) \rank(\phi) \quad \text{and} \quad \rank(\phi_{|B}) < 2c(d) \rank(\phi).
\eeq
Fix such a bipartition for the rest of the proof.
For any given subsets $I\subseteq A$, $J\subseteq B$, let $\phi[I, J]$ be the sum of the monomials in~$\phi$ involving at least one variable from each~$I$ and~$J$.
It is easy to see that
\beqn
\phi[I, J] = \phi_{|I\cup J} - \phi_{|I} - \phi_{|J} + \phi(0).
\eeqn
Define the commutative rings $R_A := R[x_a: a\in A]$ and $R_B := R[x_b: b\in B]$, and note that $\phi[I, J]$ can be seen both as an element of $(R_A[x_b: b\in B])^k$ and as an element of $(R_B[x_a: a\in A])^k$.
Moreover, the degree of $\phi[I, J]$ is at most $d-1$ in each of these representations.

From the identity $\phi = \phi_{|A} + \phi_{|B} - \phi(0) + \phi[A, B]$ and sub-additivity, it follows that
\beqn
\rank(\phi[A, B]) \geq \rank(\phi) - \rank(\phi_{|A}) - \rank(\phi_{|B}) - \rank(-\phi(0)).
\eeqn
Since $\rank(-\phi(0)) = \rank(\phi(0)) = \rank(\phi_{|\emptyset})$, we conclude from monotonicity under restrictions and our assumption~\eqref{eq:second_case} that
\beq\label{eq:phiAB}
\rank(\phi[A, B]) > \rank(\phi) - 6 c(d) \rank(\phi) > \rank(\phi)/2.
\eeq
In an analogous way, we obtain the inequalities
\begin{align} \label{eq:phi_IJ}
    \rank(\phi_{|I\cup J}) &\geq \rank(\phi[I, J]) - \rank(\phi_{|I}) - \rank(\phi_{|J}) - \rank(\phi(0)) \\
    &> \rank(\phi[I, J]) - 6 c(d) \rank(\phi) \notag
\end{align}
valid for all $I\subseteq A$, $J\subseteq B$.

Applying the inductive hypothesis to $\phi[A, B] \in (R_B[x_a: a\in A]_{\leq d-1})^k$ we obtain
\beqn
\Exp_{I\sim A_{1/2}} \rank(\phi[I, B]) = \Exp_{I\sim A_{1/2}} \rank(\phi[A, B]_{|I}) \geq c(d-1) \rank(\phi[A, B]).
\eeqn
Moreover, for any fixed $I\subseteq A$, the inductive hypothesis applied to the map $\phi[I, B] \in (R_A[x_b: b\in B]_{\leq d-1})^k$ gives
\beqn
\Exp_{J\sim B_{1/2}} \rank(\phi[I, J]) = \Exp_{J\sim B_{1/2}} \rank(\phi[I, B]_{|J}) \geq c(d-1) \rank(\phi[I, B]).
\eeqn
Combining the two inequalities above we conclude that
\beqn
\Exp_{I\sim A_{1/2},\, J\sim B_{1/2}} \rank(\phi[I, J]) \geq c(d-1)^2 \rank(\phi[A, B]),
\eeqn
and thus by~\eqref{eq:phiAB} we have
$\Exp_{I\sim A_{1/2},\, J\sim B_{1/2}} \rank(\phi[I, J]) > c(d-1)^2 \rank(\phi)/2$.
Together with equation~$\eqref{eq:phi_IJ}$, we conclude that
\begin{align*}
    \Exp_{I\sim A_{1/2},\, J\sim B_{1/2}}\rank(\phi_{|I\cup J})
    &> \Exp_{I\sim A_{1/2},\, J\sim B_{1/2}} \rank(\phi[I, J]) - 6c(d) \rank(\phi) \\
    &> c(d-1)^2 \rank(\phi)/2 - 6c(d) \rank(\phi) \\
    &> c(d) \rank(\phi),
\end{align*}
which is precisely what we wanted to prove.
\end{proof}

\subsection{Monotone functions on the hypercube and boosting}

Our next result is a lemma which allows us to boost the probability of some events (such as having high rank under random restrictions) from~$\eps$ to~$1-\eps$ by paying a relatively small price.

It will again be convenient to take a more abstract approach and deal with Boolean functions rather than restrictions of polynomials.
For a Boolean function~$g: \{0,1\}^n \to \{0,1\}$, the total influence of~$g$ under distribution~$\pi_{\sigma}^n$ is given by
\beqn
\mathbf{I}^{(\sigma)}(g)
=
\sum_{i=1}^n \Exp_{x\sim\pi_\sigma^n}\big|g(x) - g(x^i)\big|,
\eeqn
where~$x^i$ differs from~$x$ only in the~$i$th coordinate.
Denote by~$\mu_{\sigma}[g] := \Exp_{x\sim \pi_{\sigma}^n} \big[g(x)\big]$ its expectation.

\begin{lemma}[Boosting lemma] \label{lem:boost}
For every~$\sigma>0$ there is a constant~$C_{\sigma}>0$ such that the following holds.
Let~$n\in \N$, $f: \{0,1\}^n \to \R_+$ be a monotone~$1$-Lipschitz function and~$\eps\in (0, 1/2]$.
If~$r\geq 1$ satisfies
\beqn
\Pr_{x\sim \pi_{\sigma}^n} \big[f(x) \geq r\big] > \eps,
\eeqn
then~$\Pr_{x\sim \pi_{1.01\sigma}^n} \big[f(x) \geq r - C_{\sigma}^{1/\eps^2}\big] > 1-\eps$.
\end{lemma}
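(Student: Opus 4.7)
The plan is to amplify the event $\{f \geq r\}$ from probability $\eps$ to $1-\eps$ using an iteration of length $N = \Theta(1/\eps^2)$, each stage costing a bounded factor in the threshold and improving the probability geometrically; this explains the quoted cost $C_\sigma^{1/\eps^2}$. The main tools I expect to invoke are Talagrand's isoperimetric inequality (Theorem~\ref{thm:Talagrand}), the monotonicity and $1$-Lipschitz hypotheses on $f$, and an OR coupling that splits $\pi_{1.01\sigma}^n$ into a $\pi_\sigma^n$-sample together with many independent small auxiliary boosts.

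First, I would set $A := \{x\in \bset{n} : f(x) \geq r\}$ and note that $A$ is an up-set by monotonicity of $f$, so measures of up-sets being non-decreasing in the Bernoulli parameter give $\pi_{1.01\sigma}^n(A) \geq \pi_\sigma^n(A) > \eps$. Next I would couple a $\pi_{1.01\sigma}^n$-sample as $Z = X \vee Y_1 \vee \cdots \vee Y_N$, where $X \sim \pi_\sigma^n$ and the $Y_j$ are independent small boosts, each drawn from $\pi_{\delta_0}^n$ with $\delta_0$ chosen so that the cumulative OR has marginal $\pi_{1.01\sigma}^n$ (this uses $\delta = 0.01\sigma/(1-\sigma)$ for the total boost density and $\delta_0 = 1-(1-\delta)^{1/N}$ per stage). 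By monotonicity, the nested sequence $Z_0 = X \leq Z_1 \leq \cdots \leq Z_N = Z$ produces a non-decreasing sequence of $f$-values, and by $1$-Lipschitz each increment $f(Z_{j+1})-f(Z_j)$ is at most $|Y_{j+1}|$, which is concentrated around $n\delta_0$. The core inductive step is then to show that with probability at least $1 - j\eps/N$, there exists a stage $\leq j$ at which $Z$ lies close (in a Talagrand sense) to the enlarged up-set $A_j := \{f \geq r - s_j\}$ for some slowly-growing threshold schedule $s_j$. Applying Theorem~\ref{thm:Talagrand} to $A_j$ under $\pi_{1.01\sigma}^n$ gives that $h_2(Z; A_j) \leq k$ with good probability (for $k = O(\log(N/\eps))$), and the independence of the $(j{+}1)$-th micro-boost $Y_{j+1}$ from the history $Z_0, \ldots, Z_j$ should allow us to "repair" the remaining discrepancy using the $1$-Lipschitz property, upgrading the event to $A_{j+1}$ with an amplified probability.

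The main obstacle is precisely Step~3: in the absence of sub-additivity for $f$, the two-point distance $h_2(x;A)$ provided by Talagrand does not directly convert into a Hamming distance usable by the $1$-Lipschitz property. In Lemma~\ref{lem:subadditive} the sub-additive decomposition $f(x) \leq f(x_I + y_J) + f(z_{J'})$ was crucial to turn $h_2(x;A) < k$ into a lower bound $f(x) \geq r - k$; here we have only $f(x) \geq f(y) - |x \triangle y|$ for each witness $y \in A$, and the bound $\min(|x \triangle y|, |x \triangle z|)$ is not controlled by $h_2(x;A)$ alone. Overcoming this will require exploiting the freshness of the micro-boost randomness at each stage (so that discrepancies from earlier stages cannot accumulate adversarially), which is the delicate point that distinguishes the proof from the tensor case and forces the weaker quantitative bound.
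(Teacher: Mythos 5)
Your plan has a genuine gap at exactly the step you flag, and the gap is fatal to the route you chose rather than merely ``delicate''. Talagrand's two-point inequality cannot be used here: as you yourself observe, without sub-additivity a bound on $h_2(x;A)$ gives no control of $f(x)$, and the natural fallback --- concentration of the ordinary Hamming distance to the up-set $A=\{f\geq r\}$ --- is dimension-dependent: for a monotone set of measure only $\eps$ one can only guarantee that most points lie within Hamming distance of order $\sqrt{n\log(1/\eps)}$ of $A$, and your micro-boosts themselves occupy about $0.01\sigma n/N$ coordinates each, so any ``repair'' paid for via the $1$-Lipschitz property costs a threshold loss growing with $n$. The lemma, however, demands a loss $C_{\sigma}^{1/\eps^2}$ independent of $n$, so no iteration whose per-stage repair is measured in Hamming or boost coordinates can succeed. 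Note also that your heuristic that the exponent $1/\eps^2$ reflects $\Theta(1/\eps^2)$ amplification stages is not where that bound comes from.

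The actual proof is not an iteration but a junta argument. Set $g=1\big[f\geq r\big]$; since $\mu_q[g]$ is nondecreasing in $q$ (monotonicity of $f$), the Margulis--Russo formula forces some $q\in[\sigma,1.01\sigma]$ with total influence $\mathbf{I}^{(q)}(g)\leq 100/\sigma$. Friedgut's junta theorem with accuracy $\gamma=\eps^2/2$ then yields an approximating junta $h$ depending on a set $J$ of at most $C(q)^{200/(\sigma\eps^2)}$ coordinates with $\Pr_{x\sim\pi_q^n}[g(x)\neq h(x)]<\gamma$; this junta size is the sole source of the constant $C_{\sigma}^{1/\eps^2}$. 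An averaging (Markov) argument produces a fixed setting $z$ of the coordinates in $J$ with $h(z)=1$ and $\Pr_{y\sim\pi_q^{J^c}}[g(y,z)=0]<\eps$, and only now does the $1$-Lipschitz property enter: replacing $z$ by an arbitrary $z'$ lowers the threshold by at most $|J|$, so $f\geq r-|J|$ with probability $>1-\eps$ under $\pi_q^n$, and monotonicity passes this to $\pi_{1.01\sigma}^n$. If you wish to salvage your outline, the missing ingredient is precisely some influence/sharp-threshold input of this kind; fresh boost randomness alone does not supply a dimension-free amplification mechanism.
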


\begin{proof}
Consider the Boolean function~$g(x) := 1 \big[f(x) \geq r\big]$.
Note that there exists~$q \in [\sigma,\, 1.01\sigma]$ such that~$\mathbf{I}^{(q)}(g) \leq 100/\sigma$, as otherwise by the Margulis-Russo formula we would have
\beqn
\mu_{1.01\sigma}[g] = \mu_{\sigma}[g] + \int_{\sigma}^{1.01\sigma} \frac{d\mu_q[g]}{dq} dq \geq \int_{\sigma}^{1.01\sigma} \mathbf{I}^{(q)}(g) dq > 1.
\eeqn

Let~$\gamma \in (0, \eps/2)$ be a constant to be chosen later.
By Friedgut's Junta Theorem~\cite{Friedgut1998}, there is a~$C(q)^{100/\gamma \sigma}$-junta~$h: \{0,1\}^n \to \{0,1\}$ such that
\beqn
\Pr_{x\sim \pi_q^n} \big[g(x) \neq h(x)\big] < \gamma.
\eeqn
Let~$J \subseteq [n]$, $|J| \leq C(q)^{100/\gamma \sigma}$, be the set of variables on which~$h$ depends.
Then
\begin{align*}
    \mu_q[h] \geq \mu_q[g] - \gamma \geq \mu_{\sigma}[g] - \gamma > \eps/2,
\end{align*}
which implies~$\Pr_{z\sim \pi_q^J} \big[h(z) = 1\big] > \eps/2$.
Since
\beqn
\Exp_{z\sim \pi_q^J} \Pr_{y\sim \pi_q^{J^c}} \big[g(y,z) \neq h(z)\big] < \gamma,
\eeqn
it follows that
\beqn
\Pr_{z\sim \pi_q^J} \big[\Pr_{y\sim \pi_q^{J^c}} \big[g(y,z) \neq h(z)\big] \geq \eps\big] < \gamma/\eps.
\eeqn
Taking~$\gamma = \eps^2/2$, we conclude there exists~$z\in \{0,1\}^J$ such that~$h(z)=1$ and
\beqn
    \Pr_{y\sim \pi_q^{J^c}} \big[g(y,z) = 0\big] = \Pr_{y\sim \pi_q^{J^c}} \big[g(y,z) \neq h(z)\big] < \eps.
\eeqn

Since~$f$ is~$1$-Lipschitz by assumption, we have
\beqn
    f(y,z) \geq r \implies f(y,z') \geq r - |J| \quad \forall z'\in \{0,1\}^J,
\eeqn
and thus by monotonicity
\begin{align*}
    \Pr_{x\sim \pi_{1.01\sigma}^n} \big[f(x) \geq r - |J|\big]
    &\geq \Pr_{x\sim \pi_q^n} \big[f(x) \geq r - |J|\big] \\
    &= \Exp_{z'\sim \pi_q^J} \Pr_{y\sim \pi_q^{J^c}} \big[f(y,z') \geq r - |J|\big] \\
    &\geq \max_{z\in \{0,1\}^J} \Pr_{y\sim \pi_q^{J^c}} \big[f(y,z) \geq r\big] \\
    &> 1-\eps.
\end{align*}
This is precisely what we wanted to prove, with constant
$$C_{\sigma} = \sup_{q\in [\sigma,\, 1.01\sigma]} C(q)^{200/\sigma}.$$
Note that the above supremum is finite:
in \cite[Section~10.3]{ODonnell:2014} it is shown that we can take
\beqn
C(q) = \max\big\{ q^{-c},\, (1-q)^{-c}\big\}
\eeqn
for some absolute constant $c>0$.
\end{proof}

\subsection{The Random Restriction Theorem}

Our main result, Theorem~\ref{thm:random_poly}, follows easily from the lemmas given above.
Recall that we wish to show that
\beqn
\Pr_{J\sim [n]_{\sigma}}\big[ \rank(\phi_{|J}) \geq \kappa(d, \sigma)\cdot \rank(\phi)\big] \geq 1 - \eps
\eeqn
whenever~$\rank(\phi) \geq R(d, \sigma, \eps)$, for some well-chosen constants~$R(d, \sigma, \eps)$ and $\kappa(d, \sigma) > 0$.

\begin{proof}[ of Theorem~\ref{thm:random_poly}]
Applying Lemma \ref{lem:expectation} with~$\sigma$ substituted by~$0.9\sigma$, we obtain
\beqn
\Exp_{J\sim [n]_{0.9\sigma}} \rank(\phi_{|J}) \geq c(0.9\sigma, d) \rank(\phi).
\eeqn
Since~$\rank(\phi_{|J}) \leq \rank(\phi)$ for all subsets~$J \subseteq [n]$, we conclude that
\beqn
\Pr_{J\sim [n]_{0.9\sigma}} \bigg[ \rank(\phi_{|J}) \geq \frac{c(0.9\sigma, d)}{2} \rank(\phi) \bigg] \geq \frac{c(0.9\sigma, d)}{2}.
\eeqn

By possibly decreasing~$\eps$ a little, we may assume that~$\eps < c(0.9\sigma, d)/2$.
Denote
\beqn
\kappa(d, \sigma) = \frac{c(0.9\sigma, d)}{4} \quad \text{and} \quad R(d, \sigma, \eps) = \frac{4 C_{0.9\sigma}^{1/\eps^2}}{c(0.9\sigma, d)},
\eeqn
where~$C_{0.9\sigma} > 0$ is the constant guaranteed by the boosting lemma, Lemma \ref{lem:boost}.
Applying the boosting lemma to the function~$J \mapsto \rank(\phi_{|J})$ with~$r = c(0.9\sigma, d) \rank(\phi)/2$, we get
\begin{align*}
    \Pr_{J\sim [n]_{0.9\sigma}} \big[ \rank(\phi_{|J}) \geq r \big] &\geq c(0.9\sigma, d)/2 > \eps \\
    &\implies \Pr_{J\sim [n]_{\sigma}} \big[ \rank(\phi_{|J}) \geq r - C_{0.9\sigma}^{1/\eps^2} \big] > 1-\eps.
\end{align*}
Since~$c(0.9\sigma, d) \rank(\phi)/2 - C_{0.9\sigma}^{1/\eps^2} \geq \kappa(\sigma, d) \rank(\phi)$ whenever $\rank(\phi) \geq R(d, \sigma, \eps)$, the result follows.
\end{proof}

\section{Discussion and open problems}
\label{sec:discussion}

The motivation for this work stems from a problem from theoretical computer science which concerns decoding corrupted error-correcting codes (ECCs) with NC$^0[\oplus]$ circuits~\cite{BrietBCSN}.
An ECC is a map $E:\F_2^k\to \F_2^n$ with the property that the points in its image (codewords) are well-separated in Hamming distance, enabling retrieval of encoded messages provided codewords are not too badly corrupted during transmission or storage.
A standard noise model for corruption is the \emph{symmetric channel}:
for a parameter $\sigma\in [0,1]$ and given an element $y\in \F_2^n$, this channel samples a set $I\sim [n]_{\sigma}$ and, for each $i\in I$, replaces the coordinate~$y_i$ with a uniformly distributed element over~$\F_2$.
The problem is to determine whether NC$^0[\oplus]$ circuits are capable of correctly decoding corrupted codewords with good probability over this noise distribution.

It turns out that the mappings such circuits can effect are precisely constant-degree polynomial maps $\phi:\F_2^n\to\F_2^k$.
One of the main results in~\cite{BrietBCSN} shows that the fraction of messages such maps can correctly decode with non-negligible probability (over the symmetric channel noise model) tends to zero as the size~$k$ of the messages grows.
The proof of this result uses a structure-versus-randomness strategy to analyze the decoding capability of~$\phi$ based on the value $\arank_d(\phi)$ for $d = \deg(\phi)$ (the proof uses no assumptions on the specific ECC).
The key to analyzing the case when this rank is high~-- the pseudorandom setting~-- is to understand how this rank behaves under random restrictions originating from the symmetric channel noise;
this is what motivated Theorem~\ref{thm:random_poly}.
While in this application the degree of~$\phi$ may exceed the field size, a similar result but with quantitatively stronger bounds can be obtained in the high-characteristic setting, where $\charac(\F) > \deg(\phi)$, by using Theorem~\ref{thm:random_tensor}.

\medskip
As already remarked in the Introduction, our proof of Theorem~\ref{thm:random_tensor} for higher-order tensors proceeds quite differently from the matrix case (Proposition~\ref{prop:matrixcase}).
The reason for this is that an analogous proof would require the existence of a high-rank sub-tensor;
to explain this more precisely, we introduce the following definition.

\begin{definition}[Core property]
\label{def:core}
Let $A, B: \R_+ \to \R_+$ be unbounded increasing functions, and let $\rank: (\F^{\infty})^{\otimes d} \to \R_+$.
We say that~$\rank$ satisfies the \emph{$(A,B)$-core property} if, for every $d$-tensor $T\in (\F^{\infty})^{\otimes d}$ of high enough rank~$\rank(T)$, there exist sets $J_1, \dots, J_d \subset \N$ of size at most $A(\rank(T))$ such that $\rank(T_{|J_{[d]}}) \geq B(\rank(T))$.
We say that~$\rank$ satisfies the \emph{linear core property} if it satisfies the $(A,B)$-core property for linear functions $A(r) = Lr$ and $B(r) = \beta r$, with $L,\beta > 0$.
\end{definition}

Draisma~\cite{Draisma:2019} proved that slice rank has a core-like property, up to local linear transformation and provided~$\F$ is infinite.
In particular his result shows that, for every $d,r\in \N$ and every $d$-tensor $T\in \bigotimes_{i=1}^d \F^{X_i}$ of slice rank at least~$r$, there are linear maps $\varphi_i:\F^{X_i}\to \F^n$ such that the tensor $(\varphi_1\otimes\cdots\otimes\varphi_d)T$ also has slice rank at least~$r$,
where~$n = n(d,r)$ is a constant depending only on~$d$ and~$r$.

More in line with Definition~\ref{def:core}, Karam~\cite{Karam2022} recently proved that several rank functions for tensors defined in terms of decompositions, including the slice rank, partition rank and tensor rank, satisfy the $(A,B)$-core property for some functions $A$ and $B$.
For partition rank over finite fields, for instance, he obtains explicit functions $A(r) = \exp(O_{d,\F}(r))$ and $B(r) = \Omega_d(r/(\log r)^d)$;
for the slice rank of 3-tensors he shows that one can take $A(r)=O(r)$ and $B(r) = \Omega(r^{1/3})$;
and for tensor rank, he shows the ``perfect'' linear core property $A(r) = B(r) = r$.
Karam conjectures, moreover, that all these rank functions in fact satisfy the linear core property~\cite[Conjecture~13.1]{Karam2022}.
In this case, a similar argument to the one we used for matrices in the introduction allows us to easily deduce a Random Restriction Theorem:

\begin{theorem}\label{thm:restriction_core}
Suppose $\rank: (\F^{\infty})^{\otimes d} \to \R_+$ satisfies the linear core property, monotonicity under restrictions and the restriction Lipschitz property.
Then for every~$\sigma\in (0,1]$ there exist constants~$C, \kappa>0$ such that, for every $d$-tensor $T \in \bigotimes_{i=1}^d \F^{[n_i]}$, we have
\beqn
\Pr_{I_1\sim [n_1]_{\sigma}, \dots, I_d\sim [n_d]_{\sigma}}\big[ \rank \big(T_{|I_{[d]}}\big) \geq \kappa\cdot \rank(T)\big] \geq 1 - C e^{-\kappa \rank(T)}.
\eeqn
\end{theorem}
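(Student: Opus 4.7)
The plan is to mimic the proof of the matrix case (Proposition~\ref{prop:matrixcase}), using the linear core property as a substitute for the existence of a full-rank $r \times r$ submatrix. Let $r = \rank(T)$, assume $r$ is large enough for the linear core property to apply, and choose sets $J_1,\dots,J_d \subset \N$ with $|J_i|\leq Lr$ and $\rank\big(T_{|J_{[d]}}\big) \geq \beta r$. A useful preliminary observation is that $|J_i| \geq \beta r$ as well: applying the restriction Lipschitz property with $I_j = \emptyset$ and $I_i = J_i$ for $i \neq j$ makes $T_{|I_{[d]}}$ the zero tensor and yields $\rank(T_{|J_{[d]}}) \leq |J_j|$. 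Thus each $|J_i|$ lies in $[\beta r, Lr]$, which will make the Chernoff step genuinely exponential in $r$.

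I would first handle the case $\sigma \geq \sigma_0 := 1 - \beta/(8dL)$ directly. Sample $I_i \sim [n_i]_\sigma$ independently for each $i$, and set $K_i := J_i \cap I_i$. Since $K_i \subseteq J_i$ and $K_i \subseteq I_i$, monotonicity together with the restriction Lipschitz property give
\beqn
\rank\big(T_{|I_{[d]}}\big) \;\geq\; \rank\big(T_{|K_{[d]}}\big) \;\geq\; \rank\big(T_{|J_{[d]}}\big) - \sum_{i=1}^d |J_i \setminus I_i|.
\eeqn
The loss $X := \sum_{i=1}^d |J_i \setminus I_i|$ is a sum of independent Bernoulli$(1-\sigma)$ variables indexed by the elements of $J_1 \sqcup \dots \sqcup J_d$, with expectation at most $(1-\sigma)dLr \leq \beta r/8$. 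A standard Chernoff bound yields $\Pr[X > \beta r/2] \leq e^{-\Omega(r)}$, and hence $\rank(T_{|I_{[d]}}) \geq \beta r/2$ holds with probability at least $1 - e^{-\Omega(r)}$.

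For arbitrary $\sigma \in (0,1]$, I would reduce to the above regime by a coupling. Fix $k = \lceil \log\sigma / \log\sigma_0 \rceil$, so that $\sigma_0^k \leq \sigma$; the intersection of $k$ independent $[n_i]_{\sigma_0}$ samples is distributed as $[n_i]_{\sigma_0^k}$, and can therefore be coupled to lie inside a $[n_i]_\sigma$ sample. Set $T^{(0)} := T$, and iteratively let $T^{(j)}$ be the restriction of $T^{(j-1)}$ to $I^{(j)}_{[d]}$, where each $I^{(j)}_i \sim [n_i]_{\sigma_0}$ is drawn independently. Applying the previous paragraph to each successive $T^{(j-1)}$—whose rank is at least $(\beta/2)^{j-1} r$ by induction, and to which the core property still applies once $r$ is large enough—and taking a union bound over the $k$ rounds gives
\beqn
\Pr\big[\rank(T_{|I_{[d]}}) \geq (\beta/2)^k\, r\big] \;\geq\; 1 - k\,e^{-\Omega((\beta/2)^{k-1}r)},
\eeqn
where monotonicity of rank lets us pass from the coupled intersection back to the $[n_i]_\sigma$ sample. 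This yields the constants $\kappa = (\beta/2)^k$ and $C$ claimed by the theorem; ranks below the threshold at which the iteration breaks down are absorbed into $C$ by inflating it until the bound is vacuous.

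The main obstacle I anticipate is purely quantitative: the argument requires $\sigma_0$ very close to~$1$ (so that $(1-\sigma_0)dLr$ can be absorbed into $\beta r/2$), which forces roughly $k \asymp dL\log(1/\sigma)/\beta$ iterations and degrades $\kappa$ exponentially in $dL/\beta$. Obtaining bounds competitive with Theorem~\ref{thm:random_tensor} would likely require a strengthening of the hypothesis—for instance, a ``nested'' core property in which restrictions of cores remain cores of comparable quality—or a concentration-based argument that bypasses explicit cores altogether.
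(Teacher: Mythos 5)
Your proposal is correct and follows essentially the same route as the paper's proof: fix the linear core $J_1,\dots,J_d$, use a Chernoff bound to show a restriction with retention probability close to $1$ removes only $O(\beta r)$ core elements so the Lipschitz and monotonicity properties preserve a constant fraction of the rank with probability $1-e^{-\Omega(r)}$, then iterate a bounded number of rounds (using that intersections of independent Bernoulli samples are Bernoulli and monotonicity) to reach general $\sigma$, absorbing small ranks into the constant $C$. The only differences are cosmetic: your constants ($1-\beta/(8dL)$, factor $\beta/2$ per round, a single Chernoff bound on the total loss) versus the paper's ($1-\beta/(3dL)$, factor $\beta/3$, per-coordinate Chernoff plus a union bound over the $d$ directions).
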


\begin{proof}
Let $L,\beta > 0$ be the constants in the linear core property of $\rank$, and denote $\lambda = \beta/(3dL)$.
For a given $d$-tensor $T\in \bigotimes_{i=1}^d \F^{[n_i]}$, fix sets $J_1 \subseteq [n_1], \dots, J_d \subseteq [n_d]$ of size $L \rank(T)$ such that $\rank(T_{|J_{[d]}}) \geq \beta \rank(T)$.

Let $I_1\sim (J_1)_{1-\lambda}, \dots, I_d\sim (J_d)_{1-\lambda}$ be random sets and consider the random event
\beqn
E = \big\{|I_i| \geq (1-2\lambda) |J_i|\, \text{ for all } 1\leq i \leq d\big\}.
\eeqn
Whenever $E$ holds, by the restriction Lipschitz property we have
\begin{align*}
    \rank(T_{|I_{[d]}})
    &\geq \rank(T_{|J_{[d]}}) - \sum_{i=1}^d |J_i \setminus I_i| \\
    &\geq \beta \rank(T) - d \cdot 2\lambda L \rank(T) \\
    &= \beta \rank(T)/3.
\end{align*}
By the Chernoff bound and union bound, the probability of $E$ is at least $1- d\, e^{-(\beta/12d) \rank(T)}$;
it then follows from monotonicity that
\beqn
\Pr_{I_1\sim [n_1]_{1-\lambda}, \dots, I_d\sim [n_d]_{1-\lambda}} \bigg[\rank(T_{|I_{[d]}}) \geq \frac{\beta}{3} \rank(T)\bigg] \geq 1- d\, e^{-\frac{\beta}{12d} \rank(T)}.
\eeqn

Now we apply the same argument to the (random) tensor $\Tilde{T} = T_{|I_{[d]}}$, and union-bound with the event $\big\{ \rank(T_{|I_{[d]}}) \geq \beta\rank(T)/3 \big\}$.
Since $(I_i)_{1-\lambda}$ is distributed as $[n_i]_{(1-\lambda)^2}$ when $I_i\sim [n_i]_{1-\lambda}$, we conclude that
\beqn
\Pr_{I_1\sim [n_1]_{(1-\lambda)^2}, \dots, I_d\sim [n_d]_{(1-\lambda)^2}} \bigg[\rank(T_{|I_{[d]}}) \geq \Big(\frac{\beta}{3}\Big)^2 \rank(T)\bigg] \geq 1- 2d\, e^{-\frac{\beta}{12d} \frac{\beta}{3} \rank(T)}.
\eeqn
In general, applying this same argument recursively $t$ times in total, we get
\beqn
\Pr_{I_1\sim [n_1]_{(1-\lambda)^t}, \dots, I_d\sim [n_d]_{(1-\lambda)^t}} \Big[\rank(T_{|I_{[d]}}) \geq \Big(\frac{\beta}{3}\Big)^t \rank(T)\Big] \geq 1- td\, e^{-\frac{\beta}{12d} (\frac{\beta}{3})^{t-1} \rank(T)}.
\eeqn
The theorem now follows by taking $t$ to be the smallest integer for which $(1-\lambda)^t \leq \sigma$, and using monotonicity under restrictions.
\end{proof}

We quickly remark on another interesting connection between our results and recent work on high-rank maps by Gowers and Karam~\cite{GowersK2022}.
These authors studied equidistribution properties of polynomials and multilinear forms on $\F_p^n$ when the variables are restricted to subsets of their domain;
this setting is quite similar to what motivated our studies, as explained in the beginning of this section.
A crucial step in their arguments was a result (Proposition~3.5 in~\cite{GowersK2022}) showing that the values taken by multilinear forms of high partition rank must be close to uniformly distributed under a wide range of non-uniform input distributions.
Under the well-known conjecture (within additive combinatorics) that the partition rank and analytic rank of tensors are equivalent up to a multiplicative constant, their result would straightforwardly imply the conclusion of our Random Restriction Theorem (Theorem~\ref{thm:random_tensor}) when restricted to either the partition rank or the analytic rank of tensors, although making use of very different arguments.

\bigskip

Analogous to Definition~\ref{def:core}, it also makes sense to define a core property for polynomial maps (which include single polynomials by setting $k = 1$).

\begin{definition}
\label{def:core_poly}
Let $A, B: \R_+ \to \R_+$ be unbounded increasing functions, and let $\rank: \Pol_{\leq d}(\F^n;\F^k) \to \R_+$.
We say that~$\rank$ satisfies the \emph{$(A,B)$-core property} if, for every polynomial map $\phi\in \Pol_{\leq d}(\F^n;\F^k)$ of high enough rank~$\rank(\phi)$, there exists a set $I\subseteq [n]$ of size at most $A(\rank(\phi))$ such that $\rank(\phi_{|I}) \geq B(\rank(\phi))$.
We say that~$\rank$ satisfies the \emph{linear core property} if it satisfies the $(A,B)$-core property for linear functions $A(r) = Lr$ and $B(r) = \beta r$, with $L,\beta > 0$.
\end{definition}

A property comparable to a core property for polynomial maps was proved by Kazhdan and Ziegler~\cite{KazhdanZ:2020}.
For simplicity, we state it here only for polynomials.
They showed that a polynomial $P\in \F[x_1,\dots,x_n]$ with high Schmidt rank and $\deg(P) < \charac(\F)$ is universal in the following sense: 
If $\prank(P) \geq r$, then for any $Q\in \F[x_1,\dots,x_m]$ of degree $\deg(P)$ there is an affine map $\phi:\F^m\to\F^n$ such that $Q = P\circ \phi$, provided~$r$ is large enough in terms of~$m$.
Taking~$Q$ to be a polynomial of maximal Schmidt rank~$m$ shows that, up to an affine transformation,~$P$ restricts to an $m$-variate polynomial of maximal Schmidt rank.

It would be of interest to know which notions of rank for polynomial maps have a core property, and especially if any of the rank functions discussed here have the linear core property (see~\cite{BlatterDR:2022} for recent progress on this).
The same proof as given for Theorem~\ref{thm:restriction_core} shows that a linear core property implies a Random Restriction Theorem for polynomial maps that is quantitatively stronger than Theorem~\ref{thm:random_poly}.

\section*{Acknowledgements}

The authors would like to thank Victor Souza for bringing Karam's paper~\cite{Karam2022} to their attention, and Jan Draisma and Tamar Ziegler for a number of helpful pointers to the literature.
We also thank Thomas Karam for helpful comments on a preliminary version of this manuscript.
We are indebted to the anonymous reviewer for simplifying our original proof of Lemma~\ref{lem:expectation}, and for suggestions which improved the presentation of the paper.





\begin{dajauthors}
\begin{authorinfo}[jb]
  Jop Bri\"{e}t\\
  CWI \& QuSoft\\ 
  Amsterdam, The Netherlands\\
  j.briet\imageat{}cwi\imagedot{}nl\\
  \url{https://www.cwi.nl/~jop}
\end{authorinfo}
\begin{authorinfo}[dcs]
  Davi Castro-Silva\\
  CWI \& QuSoft\\
  Amsterdam, The Netherlands\\
  davisilva15\imageat{}gmail\imagedot{}com\\
  \url{https://sites.google.com/view/davicastrosilva}
\end{authorinfo}
\end{dajauthors}

\end{document}